


\documentclass[preprint,12pt]{elsarticle}
\usepackage[left=3cm,right=3cm,top=3cm,bottom=3cm,a4paper]{geometry}


\usepackage{amssymb,amsmath,amsthm,latexsym}
\usepackage{psfrag}
\usepackage{graphicx}


\theoremstyle{plain}
\newtheorem{Thm}{Theorem}[section]
\newtheorem{Lem}[Thm]{Lemma}
\newtheorem{Prop}[Thm]{Proposition}
\newtheorem{Cor}[Thm]{Corollary}

\theoremstyle{definition}
\newtheorem{Rem}[Thm]{Remark}

\newtheorem*{Defi}{Definition}


\journal{arXiv}


\begin{document}

\begin{frontmatter}

\title{Competitively tight graphs}

\author[label1,label5]{Suh-Ryung KIM}
\author[label2]{Jung Yeun LEE}
\author[label3,label6]{Boram PARK}
\author[label4,label7]{Yoshio SANO \corref{cor1}}

\address[label1]{Department of Mathematics Education,
Seoul National University, Seoul 151-742, Korea}
\address[label2]{National Institute for Mathematical Sciences,
Daejeon 305-390, Korea}
\address[label3]{DIMACS, Rutgers University,
Piscataway, NJ 08854, United States}
\address[label4]{National Institute of Informatics, Tokyo 101-8430, Japan}
\fntext[label5]{This research was supported
by Basic Science Research Program
through the National Research Foundation of Korea (NRF) funded
by the Ministry of Education, Science and Technology (700-20110036).}
\fntext[label6]{This work was supported
by National Research Foundation of Korea Grant funded
by the Korean Government, the Ministry of Education, Science and Technology
(NRF-2011-357-C00004).}
\fntext[label7]{The author was supported
by JSPS Research Fellowships for Young Scientists.}

\cortext[cor1]{Corresponding author. {\it E-mail address:}
{\tt sano@nii.ac.jp}; {\tt y.sano.math@gmail.com}}


\begin{abstract}
The competition graph of a digraph $D$ is a (simple undirected) graph
which has the same vertex set as $D$
and has an edge between two distinct vertices $x$ and $y$ if and only if
there exists a vertex $v$ in $D$ such that
$(x,v)$ and $(y,v)$ are arcs of $D$.
For any graph $G$, $G$ together with sufficiently many isolated vertices
is the competition graph of some acyclic digraph.
The competition number $k(G)$ of a graph $G$
is the smallest number of such isolated vertices.
Computing the competition number of a graph 
is an NP-hard problem in general 
and has been one of the important research problems
in the study of competition graphs. 
Opsut~[1982] showed that the competition number of a graph $G$ 
is related to the edge clique cover number $\theta_E(G)$ of the graph $G$ 
via $\theta_E(G)-|V(G)|+2 \leq k(G) \leq \theta_E(G)$. 
We first show that for any positive integer $m$ satisfying 
$2 \leq m \leq |V(G)|$, 
there exists a graph $G$ with $k(G)=\theta_E(G)-|V(G)|+m$
and characterize a graph $G$ satisfying $k(G)=\theta_E(G)$.
We then focus on what we call \emph{competitively tight graphs} $G$
which satisfy the lower bound, i.e., $k(G)=\theta_E(G)-|V(G)|+2$.
We completely characterize the competitively tight graphs
having at most two triangles.
In addition, we provide a new upper bound for the competition number 
of a graph from which we derive a sufficient condition and 
a necessary condition for a graph to be competitively tight. 
\end{abstract}

\begin{keyword}
Competition graph; Competition number;
Edge clique cover; Upper bound; Competitively tight

\MSC[2010] 05C75, 05C20
\end{keyword}

\end{frontmatter}

\section{Introduction}

Let $D$ be a digraph.
The \emph{competition graph} of $D$, denoted by $C(D)$,
is the (simple undirected) graph which
has the same vertex set as $D$
and has an edge between two distinct vertices $x$ and $y$
if and only if
there exists a vertex $v$ in $D$ such that $(x,v)$ and $(y,v)$
are arcs of $D$.
The notion of competition graph is due to Cohen~\cite{co}.
For any graph $G$, $G$ together with sufficiently many isolated
vertices is the competition graph of an acyclic digraph.
From this observation, Roberts \cite{cn}
defined the \emph{competition number} $k(G)$ of
a graph $G$ to be the minimum integer $k$ such that $G$
together with $k$ isolated vertices is 
the competition graph of an acyclic digraph.

It does not seem to be easy in general 
to compute the competition number $k(G)$ for a given graph $G$,
as Opsut~\cite{op} showed that the computation of the 
competition number of a graph is an NP-hard problem. 
To compute exact values or give bounds 
for the competition numbers of graphs 
has been one of the foremost problems 
in the study of competition graphs 
(see \cite{kimsu} for a survey). 

There is a well-known upper and lower bound for 
the competition numbers of arbitrary graphs 
due to Opsut \cite{op}.
A subset $S \subseteq V(G)$ of the vertex set of a graph $G$
is called a \emph{clique} of $G$ if the subgraph $G[S]$
of $G$ induced by $S$ is a complete graph.
For a clique $S$ of a graph $G$ and an edge $e$ of $G$,
we say \emph{$e$ is covered by $S$}
if both of the endvertices of $e$ are contained in $S$.
An \emph{edge clique cover} of a graph $G$
is a family of cliques of $G$ such that
each edge of $G$ is covered by some clique in the family.
The \emph{edge clique cover number} of a graph $G$, denoted $\theta_E(G)$,
is the minimum size of an edge clique cover of $G$.
Opsut showed the following. 

\begin{Thm}[{\cite[Propositions 5 and 7]{op}}]\label{thm:O-UB}
For any graph $G$, $\theta_E(G)-|V(G)|+2 \leq k(G) \leq \theta_E(G)$.
\end{Thm}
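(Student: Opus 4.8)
\section*{Proof proposal}

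The plan is to prove the two inequalities separately, each time exploiting the dictionary between \emph{common prey} in an acyclic digraph $D$ and \emph{cliques} in its competition graph $C(D)$. The basic observation I would record first is this: for every vertex $v$ of $D$, the set $N^-_D(v)$ of in-neighbors of $v$ (the predators sharing $v$ as prey) is a clique of $C(D)$, since any two such predators compete at $v$ and are therefore adjacent; moreover, as $v$ ranges over $V(D)$, every edge of $C(D)$ is covered by one of these cliques. Thus $\{N^-_D(v) : v\in V(D)\}$ is always an edge clique cover of $C(D)$.

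For the upper bound $k(G)\le\theta_E(G)$ I would argue by explicit construction. Fix a minimum edge clique cover $\{C_1,\dots,C_\theta\}$ of $G$ with $\theta=\theta_E(G)$, adjoin $\theta$ new vertices $a_1,\dots,a_\theta$, and for each $i$ draw an arc from every vertex of $C_i$ to $a_i$. All arcs then point from $V(G)$ into the new set, so the resulting digraph $D$ is acyclic. Two vertices of $G$ share a prey exactly when they lie together in some $C_i$, that is, exactly when they are adjacent in $G$, while each $a_i$ has no out-arc and is therefore isolated in $C(D)$. Hence $C(D)$ is $G$ together with $\theta$ isolated vertices, giving $k(G)\le\theta_E(G)$.

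For the lower bound, the more delicate direction, I would start from an optimal realization: an acyclic digraph $D$ with $C(D)$ equal to $G$ together with $k(G)$ isolated vertices, so that $N:=|V(D)|=|V(G)|+k(G)$. Since $D$ is acyclic it admits a topological ordering $u_1,\dots,u_N$ in which every arc runs from a smaller to a larger index. Then $u_1$ is a source, so $N^-_D(u_1)=\emptyset$, and $N^-_D(u_2)\subseteq\{u_1\}$; neither of these two cliques covers an edge. The remaining $N-2$ in-neighborhoods therefore still form an edge clique cover of $C(D)$, and because adjoining isolated vertices alters neither the edges nor the cliques of size at least two, $\theta_E(C(D))=\theta_E(G)$. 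Combining these facts yields $\theta_E(G)\le N-2=|V(G)|+k(G)-2$, which rearranges to $\theta_E(G)-|V(G)|+2\le k(G)$.

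The main obstacle is the bookkeeping in the lower bound, where the additive constant $2$ must be pinned down exactly: one must verify that the in-neighborhoods genuinely constitute an edge clique cover, identify precisely why the two earliest vertices in a topological order contribute cliques too small to cover any edge (this is the sole source of the $+2$), and justify that padding with isolated vertices leaves $\theta_E$ unchanged so that the estimate obtained for $C(D)$ transfers back to $G$. The upper bound, by contrast, reduces to checking that the construction creates no unwanted edges and no directed cycle, which is routine.
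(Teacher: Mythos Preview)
Your argument is correct in both directions. The upper bound construction is the standard one, and your lower bound correctly uses the observation that the in-neighbor sets $\{N^-_D(v):v\in V(D)\}$ form an edge clique cover of $C(D)$, together with the fact that an acyclic ordering forces the first two of these sets to have size at most one, so they may be discarded.

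However, there is nothing to compare against: the paper does not prove this theorem at all. It is quoted as a known result of Opsut~\cite{op} (specifically Propositions~5 and~7 there) and is used throughout as a black box. Your write-up is essentially a reconstruction of Opsut's original argument, which is exactly the proof one would expect; the paper simply assumes it.
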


\noindent 
We note that the upper bound in Theorem~\ref{thm:O-UB} 
can be rewritten as 
$\theta_E(G)-|V(G)|+|V(G)|$, 
which leads us to ask:
For any integers $m$, $n$ satisfying $2 \leq m \leq n$,
does there exist a graph $G$ 
such that $|V(G)|=n$ and $k(G)=\theta_E(G)-n+m$?
The answer is yes by the following proposition:

\begin{Prop}
For any integers $m$ and $n$ satisfying $2 \leq m \leq n$,
there exists a graph $G$
such that $|V(G)|=n$ and $k(G)=\theta_E(G)-n+m$.
\end{Prop}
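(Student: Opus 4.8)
The plan is to exhibit, for each pair $(m,n)$ with $2\le m\le n$, an explicit graph $G$ on $n$ vertices whose competition number and edge clique cover number differ in exactly the prescribed way. The cleanest route is to look for a family of graphs whose competition number and edge clique cover number can both be computed in closed form, and whose difference $\theta_E(G)-k(G)$ can be tuned to equal $n-m$. Equivalently, since I want $k(G)=\theta_E(G)-n+m$, I need $\theta_E(G)-k(G)=n-m$, a quantity ranging over $\{0,1,\dots,n-2\}$ as $m$ ranges over $\{n,n-1,\dots,2\}$. The two extreme cases are guiding: $m=n$ forces $k(G)=\theta_E(G)$, which by the characterization promised earlier in the paper should be realized by a simple graph (e.g. a triangle-free graph or a tree), while $m=2$ forces the Opsut lower bound $k(G)=\theta_E(G)-n+2$, realized by the competitively tight graphs that are the paper's main object (the complete graph $K_n$ is the standard example, with $\theta_E=1$ and $k=1=1-n+2$ when $n=2$, and more generally $k(K_n)=1$).

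First I would treat the boundary cases to fix intuition: take $G$ to be a graph with no edges on $n$ vertices padded appropriately, or a star, to hit $m=n$ (where $k=\theta_E$), and take $G=K_n$ to hit $m=2$ (where $k(K_n)=1=\theta_E(K_n)-n+2$ since $\theta_E(K_n)=1$). The substantive work is the intermediate values $2<m<n$. Here I would build $G$ as a disjoint union or a one-point-type combination of a complete graph $K_{p}$ together with $q$ additional isolated vertices or a matching-like piece, choosing $p$ and $q$ so that $p+q=n$ and the arithmetic of $k$ and $\theta_E$ over a disjoint union yields the target. Using the facts that $\theta_E$ is additive over connected components and that the competition number of a disjoint union of graphs (each with at least one edge) behaves additively up to the isolated-vertex bookkeeping, I can assemble $G=K_p\,\dot\cup\,(\text{isolated vertices})$: then $\theta_E(G)=1$, $|V(G)|=n$, and $k(G)$ equals $k(K_p)=1$ plus the contribution of the isolated vertices, letting me slide the difference across the required range.

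The key steps, in order, are: (i) record the closed-form values $\theta_E(K_p)=1$ and $k(K_p)=1$, and the effect of adjoining isolated vertices on both invariants; (ii) for a given $(m,n)$ solve the resulting linear equation for the structural parameter (such as the clique size $p$ or the number of components) so that $k(G)=\theta_E(G)-n+m$ holds exactly; (iii) verify that the chosen parameter lies in the admissible range whenever $2\le m\le n$, checking the endpoints separately against the examples above; and (iv) confirm $|V(G)|=n$. I would state and use additivity of $\theta_E$ over components and a matching additivity or monotonicity property of $k$ under adding isolated vertices, citing or reproving the elementary facts as needed.

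The main obstacle will be controlling the competition number of the composite graph precisely, since $k$ is not as well-behaved under graph operations as $\theta_E$ is. Adding isolated vertices can \emph{decrease} the competition number (isolated vertices are exactly what the competition number counts as a deficit), so I must be careful that the padding vertices do the intended bookkeeping rather than collapsing the difference. The delicate point is to pin down $k(G)$ on the nose — not merely bound it — for the constructed family, which likely means choosing a family transparent enough that an explicit acyclic digraph realizing $C(D)=G\cup I_{k}$ can be written down to prove the upper bound on $k$, matched by the Opsut lower bound from Theorem~\ref{thm:O-UB} to prove it is tight. Once the construction is transparent, both bounds should meet, but verifying the lower bound is not automatic and is where I expect to spend the effort.
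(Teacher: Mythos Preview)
Your proposed construction does not work. Take the family you lean on, $G=K_p\cup I_q$ with $p+q=n$: then $\theta_E(G)=1$ regardless of $p$, while $k(G)\in\{0,1\}$ (it is $1$ if $q=0$ and $0$ if $q\ge 1$, since adjoining an isolated vertex drops the competition number by one until it hits zero). Hence $\theta_E(G)-k(G)\in\{0,1\}$, so this family only realizes $m\in\{n-1,n\}$, not the full range $2\le m\le n$. Your boundary check for $m=2$ is also off: for $G=K_n$ one has $\theta_E(K_n)-n+2=3-n$, which equals $k(K_n)=1$ only when $n=2$; for $n\ge 3$ the complete graph is \emph{not} competitively tight. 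The ``matching-like piece'' alternative is left undeveloped, and making it precise would drag you into computing competition numbers of disconnected graphs, exactly the ``delicate point'' you flag but never resolve.

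The underlying issue is that you are trying to tune $k(G)$ while keeping $\theta_E(G)$ essentially fixed at $1$; the equation $k(G)=\theta_E(G)-n+m$ then has almost no freedom. The paper does the opposite: it fixes $k(G)=1$ by choosing a chordal graph and tunes $\theta_E(G)$ instead. Concretely, take a path $v_1v_2\cdots v_{n-m+1}$ and attach a clique $K_m$ on $\{v_{n-m+1},\dots,v_n\}$ at the endpoint. This graph is chordal, so $k(G)=1$; its minimum edge clique cover consists of the $n-m$ path edges together with the single big clique, giving $\theta_E(G)=n-m+1$, and the identity $1=(n-m+1)-n+m$ is immediate. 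One sentence, no bookkeeping about disjoint unions required.
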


\begin{proof}
Let $V = \{ v_1, \ldots, v_n \}$ and $G$ be a graph on $V$
with the edge set consisting of the edges of the path
$v_1v_2 \cdots v_{n-m+1}$ and the edges of the complete graph
with vertex set $\{v_{n-m+1}, \ldots, v_n\}$.
Since $G$ is chordal, $k(G)=1$ (see \cite{cn}).
It is easy to see that $\theta_E(G)=(n-m)+1$.
Thus $k(G)=\theta_E(G)-n+m$.
\end{proof}

The upper bound in Theorem~\ref{thm:O-UB} is obtained
by only very special graphs as the following proposition states.

\begin{Prop}
Let $G$ be a graph with $n$ vertices. Then
the equality
$k(G)=\theta_E(G)$ holds if and only if
$G$ is the complete graph $K_n$  or
$G$ is the edgeless graph $I_n$.
\end{Prop}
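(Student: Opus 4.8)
The plan is to prove both implications, treating the \emph{if} direction by direct computation and the \emph{only if} direction by a construction that refines the upper bound $k(G)\le\theta_E(G)$ of Theorem~\ref{thm:O-UB}. Write $n=|V(G)|$ throughout.

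For the \emph{if} direction I would simply evaluate both quantities. For $G=I_n$ one has $\theta_E(I_n)=0$, and the arcless digraph on $n$ vertices is acyclic with competition graph $I_n$, so $k(I_n)=0$ and equality holds. For $G=K_n$ with $n\ge 2$, the single clique $V(G)$ is an edge clique cover, so $\theta_E(K_n)=1$; on the other hand $k(K_n)\ge 1$ because the sink of any acyclic digraph (the last vertex in a topological order) has no out-neighbor and is therefore isolated in the competition graph, whereas $K_n$ has no isolated vertex, and $k(K_n)\le 1$ via the digraph on $V(G)\cup\{u\}$ with the arcs $(v_i,u)$ from every vertex to a single extra vertex $u$. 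Hence $k(K_n)=1=\theta_E(K_n)$. (The degenerate case $n=1$, where $K_1=I_1$, gives $k=\theta_E=0$.)

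For the \emph{only if} direction I would argue the contrapositive: if $G$ is neither $K_n$ nor $I_n$, then $G$ has at least one edge (so $\theta:=\theta_E(G)\ge 1$) and $V(G)$ is not a clique, and I claim $k(G)\le\theta-1<\theta_E(G)$. Fix a minimum edge clique cover $\{C_1,\ldots,C_\theta\}$. Since $V(G)$ is not a clique, every $C_i\subsetneq V(G)$, so after relabeling I may assume there is a vertex $p\in V(G)$ with $p\notin C_\theta$. The idea is to carry out the construction underlying the bound $k(G)\le\theta_E(G)$ but to \emph{reuse} the genuine vertex $p$ as the common prey for the last clique $C_\theta$, needing only $\theta-1$ new isolated vertices $z_1,\ldots,z_{\theta-1}$ instead of $\theta$. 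Concretely, build an acyclic digraph $D$ on $V(G)\cup\{z_1,\ldots,z_{\theta-1}\}$ by ordering $V(G)$ with $p$ placed last, followed by $z_1,\ldots,z_{\theta-1}$; for each $i<\theta$ add an arc from every vertex of $C_i$ to $z_i$, and add an arc from every vertex of $C_\theta$ to $p$. All arcs point forward in this order (the arcs into $p$ are legitimate precisely because $p\notin C_\theta$ lies last among $V(G)$, and the $z_i$ are sinks), so $D$ is acyclic, and one checks $C(D)=G\cup I_{\theta-1}$, giving $k(G)\le\theta-1$.

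The step I expect to require the most care is the verification that $C(D)=G\cup I_{\theta-1}$, i.e.\ that routing $C_\theta$ into the genuine vertex $p$ introduces no spurious edges and loses none of $p$'s own edges: the common-prey pairs at $z_i$ are exactly the edges of $C_i$ and those at $p$ are exactly the edges of $C_\theta$, whose union is $E(G)$; each $z_i$ is a sink and hence isolated; and the neighborhood of $p$ is unchanged because making $p$ a prey does not alter its out-arcs, while $p\notin C_\theta$ guarantees that every edge incident to $p$ is still realized through some $z_i$ with $i<\theta$. The condition $p\notin C_\theta$ is exactly what makes this argument go through, and it is available precisely because $G\neq K_n$ forces $V(G)$ not to be a clique. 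The remaining bookkeeping (acyclicity and isolation of the $z_i$) is routine, so this completes the contrapositive and hence the characterization.
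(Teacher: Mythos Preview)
Your proof is correct and follows essentially the same approach as the paper: in the nontrivial direction you save one isolated vertex over the generic $k(G)\le\theta_E(G)$ construction by routing one clique of a minimum edge clique cover into a genuine vertex of $G$ lying outside that clique (available precisely because $G\neq K_n$), yielding $k(G)\le\theta_E(G)-1$. Your write-up is more explicit than the paper's about acyclicity and the verification of $C(D)=G\cup I_{\theta-1}$, but the underlying construction is the same up to which clique is singled out.
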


\begin{proof}
If $G=K_n$, then $k(G)=1=\theta_E(G)$.
If $G=I_n$, then $k(G)=0=\theta_E(G)$.
Suppose that $G \neq K_n$ and $G \neq I_n$.
There exists an edge clique cover
$\{S_1, \ldots, S_r \}$ of $G$,
where $r=\theta_E(G)$.
Note that $r \geq 1$ since $G \neq I_n$,
and there exists a vertex $v \in V(G) \setminus S_1$ 
since $G \neq K_n$.
Take vertices $z_2, \ldots, z_r$ not in $G$. 
We define a digraph $D$ by
$V(D):=V(G) \cup I^*$ and
$A(D):=\{(x,v) : x \in S_1\} \cup A^*$, where $I^*$ and $A^*$ 
are defined as follows. If $r=1$, then $I^*=\emptyset$. 
Otherwise, $I^*=\{z_2, \ldots, z_r\}$.
If $r=1$, then $A^*=\emptyset$. 
Otherwise, $A^*=\cup_{i=2}^r \{(x,z_i) : x \in S_i \}$.
Thus we have $k(G) \leq r-1$, which implies that
$k(G) \neq \theta_E(G)$.
Hence the claim is true.
\end{proof}

Then it is natural to ask:
By which graphs is the lower bound in Theorem~\ref{thm:O-UB} achieved?
To answer this question, we introduce the following notion.

\begin{Defi}
{\rm
A graph $G$ is said to be 
\emph{competitively tight}
if it satisfies $k(G)=\theta_E(G)-|V(G)|+2$.
}
\end{Defi}

We can show by the following result that any triangle-free graph $G$
satisfying $|E(G)| \geq |V(G)|-1$ is competitively tight.

\begin{Thm}[{\cite[Theorem 8]{triangle-free}}]\label{thm:triangle-free}
If a graph $G$ is triangle-free, then
\[
k(G)=
\left\{
\begin{array}{ll}
0 & \text{ if } |V(G)|=1; \\
\max\{1, |E(G)|-|V(G)|+2 \}
& \text{ if } G \text{ has no isolated vertices}; \\
\max\{0, |E(G)|-|V(G)|+2 \} & \text{ otherwise.} \\
\end{array}
\right.
\]
\end{Thm}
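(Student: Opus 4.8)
The plan is to prove the three displayed cases by sandwiching $k(G)$ between matching lower and upper bounds, with essentially all the content living in a single construction. First I would record the one structural fact that makes triangle-free graphs tractable: since $G$ has no triangle, every clique has at most two vertices, so the only clique covering an edge $uv$ is $\{u,v\}$ itself. Hence every edge clique cover must contain all the edges, giving $\theta_E(G)=|E(G)|$. Feeding this into the lower bound of Theorem~\ref{thm:O-UB} yields $k(G)\ge |E(G)|-|V(G)|+2$. I would combine this with the elementary observation that the competition graph of any acyclic digraph has an isolated vertex, namely a sink of the digraph, which has out-degree $0$ and therefore competes with nobody: thus $k(G)\ge 1$ whenever $G$ has no isolated vertex, and $k(G)\ge 0$ always. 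Together these give the ``$\max$'' lower bounds in all three cases, and the case $|V(G)|=1$ is immediate.

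For the upper bound I would use the standard reformulation of the statement ``$G$ together with $k$ isolated vertices is the competition graph of an acyclic digraph'': it is equivalent to the existence of a linear (acyclic) order on the enlarged vertex set together with an assignment to each edge $uv$ of a distinct \emph{prey} vertex that follows both $u$ and $v$ in the order, the covered edges being exactly those whose two endpoints share a prey (triangle-freeness forces each prey to absorb at most one edge). The task thus becomes: order $V(G)$, use the original vertices as prey for as many edges as possible, and pay one extra isolated vertex for each edge left over. Writing $M$ for the maximum number of edges matchable to distinct original prey over all orders, this gives $k(G)\le\max\{\varepsilon,\,|E(G)|-M\}$, where $\varepsilon\in\{0,1\}$ records whether a fresh sink is forced. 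Since the first two vertices of any order can never be prey (a prey needs two distinct earlier endpoints), $M\le |V(G)|-2$, and the proof reduces to showing $M=\min\{|E(G)|,|V(G)|-2\}$, i.e.\ that a well-chosen order has ``deficiency zero''.

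By the defect form of Hall's theorem, for a fixed order the unmatchable prey are controlled by $\max_{t}\big[(t-1)-e_t\big]$, where $e_t$ is the number of edges induced by the first $t$ vertices; so I need an order in which every prefix of size $t\le|V(G)|-1$ spans at least $t-1$ edges. For a connected graph this is free: any order whose prefixes are all connected works, since a connected graph on $t$ vertices has at least $t-1$ edges. The main obstacle is the disconnected case, where no order keeps every prefix connected. Here I would order the nontrivial components by decreasing surplus $s_i:=|E(G_i)|-|V(G_i)|+1\ge 0$, lay each out in a connected order, and place all isolated and added vertices at the end. The Hall condition then reduces to the partial-sum inequalities $\sum_{i\le j}s_i\ge j$ at the component boundaries, and a short exchange argument shows these hold for the decreasing-surplus order whenever $|E(G)|\ge|V(G)|-1$, which is exactly the regime in which $|E(G)|-|V(G)|+2$ is the governing bound; in the complementary sparse regime a single appended sink suffices and a parallel estimate applies. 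Verifying this boundary bookkeeping across components, and checking that the resulting digraph is genuinely acyclic with the added vertices isolated, is where the care lies; once it is done the upper bound meets the lower bound and the stated formula follows.
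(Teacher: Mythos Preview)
The paper does not contain a proof of this theorem: it is quoted verbatim from Kim's paper \cite{triangle-free} and used as a black box (to derive Corollary~\ref{cor:TF} and, later, in the proof of Theorem~\ref{thm:main}). So there is no ``paper's own proof'' to compare against.

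That said, your outline is a legitimate route to the result and is essentially correct. The lower bounds are exactly right: $\theta_E(G)=|E(G)|$ for triangle-free $G$ feeds Opsut's inequality, and the sink argument gives the extra $1$ when there are no isolated vertices. For the upper bound, your reduction to an injective edge\,$\to$\,prey assignment along a linear order is the standard device, and your Hall analysis is accurate: with the prey positions $3,\ldots,n$ having nested neighbourhoods $N(k)=E(G[v_1,\ldots,v_{k-1}])$, Hall's condition for saturating all of them is precisely $e_t\ge t-1$ for $2\le t\le n-1$. For a connected graph any ``connected-prefix'' order (e.g.\ a BFS/DFS order) works, and your decreasing-surplus ordering for the disconnected case is sound: if $|E(G)|\ge |V(G)|-1$ and $s_1\ge\cdots\ge s_c$, a failure $\sum_{i\le j}s_i\le j-1$ would force $s_j=0$, hence $s_{j+1}=\cdots=s_c=0$ and $\sum_i s_i\le j-1\le c-2$, contradicting $\sum_i s_i=|E|-|V|+c\ge c-1$.

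The only place I would ask you to be more explicit is the ``sparse'' regime $|E(G)|\le |V(G)|-2$ (no isolated vertices), which you dismiss with ``a single appended sink suffices and a parallel estimate applies.'' Here the target is $k(G)\le 1$, and the same Hall argument does \emph{not} immediately give it: you now want to saturate only the edge side, so the relevant defect inequality is $e_t\le$ (number of available prey after position $t$), and the component bookkeeping changes shape. It still goes through---place one component with $s_i=0$ last so that its final leaf is the unique vertex needing an external prey---but this step deserves a sentence rather than a hand-wave. Likewise, the ``otherwise'' case with isolated vertices should be stated separately (put the isolated vertices last; then no external sink is needed provided $|E(G)|\ge |V(G)|-2$, and for $|E(G)|\le |V(G)|-2$ the target is $k(G)=0$, which your construction does give once an isolated vertex sits at the end).
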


\noindent
By this theorem, 
we know that a triangle-free graph $G$ without isolated vertices
satisfying $|E(G)| \geq |V(G)|-1 >0$ has the
competition number $|E(G)|-|V(G)|+2$.  
We also know that a triangle-free graph $G$ 
with isolated vertices satisfying $|E(G)| \geq |V(G)|-2 \geq 0$ 
has the competition number $|E(G)|-|V(G)|+2$. 
Since a triangle-free graph $G$ satisfies $\theta_E(G)=|E(G)|$,
the lower bound in Theorem~\ref{thm:O-UB} is achieved
by any triangle-free graph $G$ without isolated vertices
satisfying $|E(G)| \geq |V(G)|-1$ or any triangle-free graph $G$
with isolated vertices satisfying $|E(G)| \geq |V(G)|-2$.
Conversely, if a graph $G$ satisfies $|V(G)|=1$, then $|E(G)|-|V(G)|+2=1$
while $k(G)=0$. If a triangle-free graph $G$ has no isolated vertices
and $|E(G)| \leq |V(G)|-2$, then it cannot be competitively tight
since $k(G)=1$.
If a triangle-free graph $G$ has isolated vertices and $|E(G)|\leq |V(G)|-3$,
then it cannot be competitively tight since $k(G)$ is nonnegative.
Thus the competitively tight triangle-free graphs can be characterized
as follows:

\begin{Cor}\label{cor:TF}
A triangle-free graph $G$ is competitively tight if and only if
$|V(G)| \geq 2$ and $G$ satisfies one of the following:
\begin{itemize}
\item[{\rm (i)}]
$G$ has no isolated vertices and $|E(G)| \geq |V(G)|-1$;
\item[{\rm (ii)}]
$G$ has isolated vertices and $|E(G)| \geq |V(G)|-2$.
\end{itemize}
\end{Cor}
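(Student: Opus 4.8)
The plan is to read the corollary off directly from Theorem~\ref{thm:triangle-free} by translating the defining equation $k(G)=\theta_E(G)-|V(G)|+2$ into a condition purely on $|E(G)|$ and $|V(G)|$. The first and crucial observation is that a triangle-free graph has no clique of size three or more, so its only cliques of size at least two are its edges; consequently every edge clique cover must contain, for each edge, a clique covering it, and taking the edges themselves is optimal, giving $\theta_E(G)=|E(G)|$. Substituting this into the definition, $G$ is competitively tight if and only if $k(G)=|E(G)|-|V(G)|+2$. It therefore suffices to compare this target quantity with the value of $k(G)$ supplied by Theorem~\ref{thm:triangle-free} in each of its three regimes.

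First I would dispose of the degenerate case $|V(G)|=1$: here $k(G)=0$ while $|E(G)|-|V(G)|+2=1$, so the equality fails and such a graph is never competitively tight; this is exactly why the hypothesis $|V(G)|\ge 2$ appears. Next, assuming $G$ has no isolated vertices, Theorem~\ref{thm:triangle-free} gives $k(G)=\max\{1,|E(G)|-|V(G)|+2\}$, and this maximum equals $|E(G)|-|V(G)|+2$ precisely when $|E(G)|-|V(G)|+2\ge 1$, i.e.\ when $|E(G)|\ge|V(G)|-1$; this yields condition~(i). Finally, when $G$ has at least one isolated vertex (and $|V(G)|\ge 2$), Theorem~\ref{thm:triangle-free} gives $k(G)=\max\{0,|E(G)|-|V(G)|+2\}$, which equals $|E(G)|-|V(G)|+2$ exactly when $|E(G)|-|V(G)|+2\ge 0$, i.e.\ when $|E(G)|\ge|V(G)|-2$; this yields condition~(ii).

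Assembling the three cases, a triangle-free graph with $|V(G)|\ge 2$ is competitively tight if and only if (i) or (ii) holds, while no triangle-free graph with $|V(G)|=1$ is competitively tight, which is precisely the statement of the corollary. There is no serious obstacle here: once $\theta_E(G)=|E(G)|$ is in hand, the argument is a routine case analysis. The only point requiring a little care is the bookkeeping at the boundaries of the $\max$---that is, verifying in each regime exactly when the maximum is realized by the linear term $|E(G)|-|V(G)|+2$ rather than by the constant ($1$ or $0$)---together with remembering that the single-vertex graph is governed by the separate $|V(G)|=1$ clause of Theorem~\ref{thm:triangle-free} rather than by its ``otherwise'' clause.
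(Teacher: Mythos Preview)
Your proof is correct and follows essentially the same approach as the paper: both arguments use $\theta_E(G)=|E(G)|$ for triangle-free graphs and then perform a case analysis against the three regimes of Theorem~\ref{thm:triangle-free}, checking in each case whether $k(G)$ matches $|E(G)|-|V(G)|+2$. Your treatment of the $\max$ boundaries is slightly more explicit than the paper's prose derivation preceding the corollary, but the underlying reasoning is identical.
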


\section{Competitively tight graphs}

We begin this section by presenting simple but useful results 
which show how to obtain competitively tight graphs from existing ones.

\begin{Lem}\label{lem-2}
Let $G$ be a graph and $t$ be a nonnegative integer.
Then we have $k(G \cup I_t) \geq k(G)-t$, and
$k(G \cup I_t)=k(G)-t$ holds
if and only if $ 0 \leq t \leq k(G)$.
\end{Lem}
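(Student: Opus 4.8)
The plan is to exploit the simple observation that attaching isolated vertices is additive: for nonnegative integers $s$ and $t$ we have $(G \cup I_t) \cup I_s = G \cup I_{s+t}$. This single identity, together with the fact that the competition number of any graph is a nonnegative integer, drives the entire argument.

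First I would establish the inequality $k(G \cup I_t) \geq k(G)-t$. Set $s := k(G \cup I_t)$. By the definition of the competition number, $(G \cup I_t) \cup I_s$ is the competition graph of some acyclic digraph; but this graph is precisely $G \cup I_{t+s}$, so $G$ together with $t+s$ isolated vertices is the competition graph of an acyclic digraph. Since $k(G)$ is the \emph{minimum} number of isolated vertices with this property, we must have $t+s \geq k(G)$, that is, $k(G \cup I_t) \geq k(G)-t$.

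For the equality statement I would argue the two directions separately. For the \emph{if} part, assume $0 \leq t \leq k(G)$, so that $k(G)-t \geq 0$. By definition $G \cup I_{k(G)}$ is the competition graph of an acyclic digraph, and $G \cup I_{k(G)} = (G \cup I_t) \cup I_{k(G)-t}$; this exhibits $k(G)-t$ isolated vertices whose addition turns $G \cup I_t$ into a competition graph of an acyclic digraph, whence $k(G \cup I_t) \leq k(G)-t$. Combined with the inequality proved above, this gives $k(G \cup I_t) = k(G)-t$. For the \emph{only if} part, suppose $k(G \cup I_t) = k(G)-t$. Since $t$ is a nonnegative integer, $0 \leq t$ holds automatically, and since competition numbers are nonnegative, $k(G)-t = k(G \cup I_t) \geq 0$ forces $t \leq k(G)$; hence $0 \leq t \leq k(G)$.

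The whole argument is elementary, and there is no real obstacle to overcome. The only point that requires a moment's care is the \emph{only if} direction, where one must explicitly invoke the nonnegativity of $k(G \cup I_t)$ in order to convert the assumed equality into the bound $t \leq k(G)$; everything else follows mechanically from the additivity of disjoint union with $I_t$ and the minimality built into the definition of $k$.
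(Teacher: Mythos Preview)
Your proof is correct and is essentially the same as the paper's: both rely on the identity $(G \cup I_t) \cup I_s = G \cup I_{t+s}$ together with nonnegativity of $k$, and the only difference is organizational---you prove the inequality $k(G \cup I_t) \ge k(G)-t$ once up front, whereas the paper derives it inside the case $0 \le t \le k(G)$ and then handles $t > k(G)$ separately via $k(G)-t < 0 \le k(G \cup I_t)$.
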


\begin{proof}
Suppose that $0 \leq t \leq k(G)$.
Let $k=k(G)$. Then $G \cup I_k=(G \cup I_t) \cup I_{k-t}$
is the competition
graph of an acyclic digraph.
Thus we have $k(G \cup I_t) \leq k-t=k(G)-t$.
To show that $k(G \cup I_t) \geq k(G)-t$, let $k'=k(G \cup I_t)$.
Then $(G \cup I_t) \cup I_{k'}=G \cup I_{t+k'}$
is the competition
graph of an acyclic digraph.
Thus we have $k(G) \leq t+k'$,
i.e. $k(G)-t \leq k'=k(G \cup I_t)$.
Hence we have $k(G \cup I_t)=k(G)-t$.
If $ k(G) <t$, then we have $k(G)-t < 0 \leq k(G \cup I_t)$.
Hence the lemma is true.
\end{proof}

\begin{Prop}\label{prop:isolated}
Suppose that a graph $G$ is competitively tight.
Let $t$ be an integer such that $ 0 \leq t \leq k(G)$.
Then the graph $G \cup I_t$ is competitively tight.
\end{Prop}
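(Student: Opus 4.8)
The plan is to verify the defining equation of competitive tightness for $G \cup I_t$ directly, by computing each of the three quantities $k$, $\theta_E$, and $|V|$ that enter the relation $k(\cdot)=\theta_E(\cdot)-|V(\cdot)|+2$. Two of these behave trivially under the addition of isolated vertices. First, the vertex count clearly satisfies $|V(G \cup I_t)| = |V(G)|+t$. Second, since isolated vertices are incident to no edges, the edge sets satisfy $E(G \cup I_t)=E(G)$, and hence any edge clique cover of $G$ covers exactly the same edges in $G \cup I_t$ and conversely; this gives $\theta_E(G \cup I_t)=\theta_E(G)$.

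The key input is Lemma~\ref{lem-2}. By hypothesis we have $0 \leq t \leq k(G)$, which is precisely the condition under which the equality case of that lemma applies, so I would conclude $k(G \cup I_t) = k(G)-t$. At this point all three quantities are expressed in terms of invariants of $G$ alone.

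Finally, I would substitute the assumption that $G$ is competitively tight, namely $k(G)=\theta_E(G)-|V(G)|+2$, into $k(G \cup I_t)=k(G)-t$ to obtain $k(G \cup I_t)=\theta_E(G)-|V(G)|+2-t$, and compare this against the target value $\theta_E(G \cup I_t)-|V(G \cup I_t)|+2 = \theta_E(G)-(|V(G)|+t)+2$. The two expressions coincide, which establishes that $G \cup I_t$ is competitively tight.

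There is no serious obstacle here: the argument is a direct bookkeeping computation. The only points that warrant a moment's care are the invariance $\theta_E(G \cup I_t)=\theta_E(G)$ (which rests on the observation that isolated vertices impose no covering requirement) and the verification that the hypothesis $0 \leq t \leq k(G)$ is exactly the assumption needed to invoke the equality, rather than merely the inequality, in Lemma~\ref{lem-2}.
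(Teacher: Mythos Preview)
Your proposal is correct and follows essentially the same approach as the paper's own proof: both compute $k(G\cup I_t)=k(G)-t$ via Lemma~\ref{lem-2}, note that $\theta_E(G\cup I_t)=\theta_E(G)$ and $|V(G\cup I_t)|=|V(G)|+t$, and then substitute the competitive tightness of $G$ to verify the defining equation for $G\cup I_t$. Your write-up merely makes explicit a couple of justifications (such as why $\theta_E$ is unchanged) that the paper states without comment.
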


\begin{proof}
Since $G$ is competitively tight,
$k(G)=\theta_E(G)-|V(G)|+2$.
Since $ 0 \leq t \leq k(G)$,
by Lemma \ref{lem-2}, $k(G \cup I_t)=k(G)-t$ holds.
In addition, $\theta_E(G \cup I_t)=\theta_E(G)$ holds.
Thus we have $k(G \cup I_t)=\theta_E(G)-|V(G)|+2-t
=\theta_E(G \cup I_t)-|V(G \cup I_t)|+2$.
Hence $G \cup I_t$ is competitively tight.
\end{proof}

\begin{Prop}\label{prop:pendant}
Suppose that a graph $G$ $(\neq K_2)$ is competitively tight
and that $G$ has a vertex $v$ which is isolated or pendant.
Then the graph $G - v$ obtained from $G$
by deleting $v$ is competitively tight.
\end{Prop}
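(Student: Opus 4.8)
The plan is to treat the two possibilities for $v$ separately, in each case pinning down $k(G-v)$ by squeezing it between the lower bound of Theorem~\ref{thm:O-UB} and an upper bound coming either from Lemma~\ref{lem-2} or from an explicit digraph. Throughout write $H := G - v$ and $\ell := k(G)$. Suppose first that $v$ is isolated, so $G = H \cup I_1$, whence $\theta_E(G) = \theta_E(H)$ and $|V(G)| = |V(H)| + 1$. Competitive tightness of $G$ then gives $k(G) = \theta_E(H) - |V(H)| + 1$. The lower bound of Theorem~\ref{thm:O-UB} applied to $H$ reads $k(H) \ge \theta_E(H) - |V(H)| + 2 = k(G) + 1$, while Lemma~\ref{lem-2} with $t = 1$ gives $\ell = k(H \cup I_1) \ge k(H) - 1$, i.e. $k(H) \le \ell + 1$. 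These force $k(H) = \theta_E(H) - |V(H)| + 2$, so $H$ is competitively tight. (The lower bound needs $|V(H)| \ge 2$, which is where the degenerate case $G = I_2$ would have to be set aside.)

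Now suppose $v$ is pendant with unique neighbor $u$. Since $\deg_G(v) = 1$, the only clique covering the edge $uv$ is $\{u,v\}$, and it covers no other edge; hence $\theta_E(G) = \theta_E(H) + 1$. Together with $|V(G)| = |V(H)| + 1$ and the tightness of $G$ this yields $k(G) = \theta_E(H) - |V(H)| + 2$, and the lower bound of Theorem~\ref{thm:O-UB} (valid since $|V(H)| \ge 2$ because $G \ne K_2$) gives $k(H) \ge \theta_E(H) - |V(H)| + 2 = \ell$. Thus the whole matter reduces to proving $k(H) \le \ell$, after which $k(H) = \theta_E(H) - |V(H)| + 2$ exhibits $H$ as competitively tight.

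For $k(H) \le \ell$ I would produce, from an acyclic digraph $D$ with $C(D) = G \cup I_\ell$, an acyclic digraph $D'$ on the same vertex set minus $v$ with $C(D') = H \cup I_\ell$. Two observations set this up: any vertex preyed on by $v$ has all of its predators inside $\{u,v\}$ (otherwise $v$ would acquire a neighbor other than $u$), and the predators of $v$ form a clique $P$, all of whose edges are edges of $H$. Choosing a common prey $w$ of $u$ and $v$ (one exists since $uv \in E(G)$), I would delete $v$, delete the arc $(u,w)$, and add an arc from each vertex of $P$ to $w$; as every vertex of $P$ precedes $w$ in a topological order of $D$, acyclicity survives. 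The point of the plan is that deleting $v$ can only sever edges inside $P$ whose sole witness was $v$, while the new arcs make $w$ a common prey of exactly the clique $P$, restoring just those edges and no others. Note that this argument uses no tightness, so it in fact shows $k(G-v) \le k(G)$ for every pendant vertex.

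The hard part is checking that the repair neither loses a true edge of $H$ nor manufactures a false one. The subtle step is the deletion of $(u,w)$: rerouting $P$ onto $w$ would otherwise leave $u$ as an extra predator at $w$ and could spuriously join $u$ to $P$, even though $u$ need not be adjacent to the vertices of $P$ in $H$. Removing $(u,w)$ is legitimate precisely because, by the first observation, $w$ served $u$ only to certify the edge $uv$, which is absent in $H$; confirming that every other edge at $u$ is witnessed by a prey different from $w$ is the crux that makes $C(D') = H \cup I_\ell$ hold exactly.
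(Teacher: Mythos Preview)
Your argument is correct and follows the same two-case structure as the paper's proof, using Opsut's lower bound together with an upper bound on $k(G-v)$ in each case. The only substantive difference is in the pendant case: the paper simply asserts the folklore fact $k(G-v)=k(G)$ for pendant $v$ when $G\neq K_2$, whereas you supply an explicit acyclic-digraph construction proving $k(G-v)\le k(G)$, which is all that is needed once Opsut's bound supplies the reverse inequality. Your construction is sound---the deletion of the arc $(u,w)$ is precisely what prevents spurious edges between $u$ and $P$, and your observation that the in-neighborhood of $w$ in $D$ lies inside $\{u,v\}$ guarantees no edge of $H$ is lost at $w$. You are also right to flag $G=I_2$: the paper's proof tacitly applies Theorem~\ref{thm:O-UB} to the one-vertex graph $G-v$ there, where the lower bound fails, so strictly speaking the statement requires that exclusion as well.
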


\begin{proof}
Since $G$ is competitively tight,
$k(G)=\theta_E(G)-|V(G)|+2$.
First,
suppose that $G$ has an isolated vertex $v$.
Since $v$ is isolated,
$\theta_E(G - v )=\theta_E(G)$.
By Lemma \ref{lem-2}, $k(G) \geq k(G -v)-1$.
So we have
$k(G - v ) \leq k(G)+1
=\theta_E(G)-|V(G)|+2+1
=\theta_E(G - v)-|V(G - v)|+2$.
By Theorem~\ref{thm:O-UB}, $k(G - v )
\geq \theta_E(G- v )-|V(G- v )|+2$.
Hence we have $k(G - v )=\theta_E(G - v )
-|V(G - v )|+2$, i.e.,
$G - v$ is competitively tight.
Second,
suppose that $G$ has a pendant vertex $v$.
Note that $\theta_E(G - v)=\theta_E(G)-1$ and
$|V(G - v)|=|V(G)|-1$.
Obviously $k(G - v)=k(G)$ when $G \neq K_2$.
Hence we have $k(G - v)=\theta_E(G - v)
-|V(G - v)|+2$, i.e.,
$G - v$ is competitively tight.
\end{proof}

Due to Propositions~\ref{prop:isolated} and \ref{prop:pendant},
when we consider competitively tight graphs $G$,
we may assume that the minimum degree of $G$ is at least two.

We now begin the examination of competitively tight graphs 
which are not triangle-free. 
To this end, we recall several results of Kim and Roberts~\cite{KRTrian} 
which determine the competition numbers 
of various graphs with triangles of varying complexity. 
They found the competition number 
of a graph with exactly one triangle 
as the following theorem illustrates.

\begin{Thm}[{\cite[Corollary~7]{KRTrian}}]\label{onetrian}
Suppose that a graph $G$ is connected and has exactly one triangle.
Then 
\[
k(G)=
\left\{
\begin{array}{ll}
|E(G)|-|V(G)| & \text{ if $G$ has a cycle of length at least four}; \\
|E(G)|-|V(G)|+1 & \text{ otherwise}.
\end{array}
\right.
\]
\end{Thm}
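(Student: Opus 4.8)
The plan is to pin down $\theta_E(G)$ first, then split into two cases according to the existence of a cycle of length at least four, handling the lower bound through Opsut's inequality (Theorem~\ref{thm:O-UB}) and the upper bound by exhibiting an acyclic digraph. Since $G$ has exactly one triangle $T$, every edge not lying in $T$ belongs to no clique of size three or more (such a clique would be a second triangle), so each such edge is a maximal clique of size two and must be covered by itself. Hence a minimum edge clique cover is $T$ together with the $|E(G)|-3$ remaining edges, giving $\theta_E(G)=|E(G)|-2$.

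Suppose first that $G$ has no cycle of length at least four. Then every cycle of $G$ is a triangle, and since there is exactly one triangle, $G$ has a unique cycle; being connected and unicyclic it satisfies $|E(G)|=|V(G)|$. Having no induced cycle of length at least four, $G$ is chordal, so $k(G)=1$ because $G$ has an edge (see~\cite{cn}); this is exactly $|E(G)|-|V(G)|+1$, settling this case.

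Now suppose $G$ has a cycle of length at least four. Combining $\theta_E(G)=|E(G)|-2$ with Theorem~\ref{thm:O-UB} gives the lower bound $k(G)\ge |E(G)|-|V(G)|$ at once, so the whole task reduces to constructing an acyclic digraph $D$ with $C(D)=G\cup I_{|E(G)|-|V(G)|}$. I would first reduce to the case $\delta(G)\ge 2$: deleting a pendant vertex leaves the competition number unchanged when $G\neq K_2$ (as in the proof of Proposition~\ref{prop:pendant}) and preserves $|E(G)|-|V(G)|$, and it destroys neither the triangle nor the long cycle, so passing to the $2$-core keeps a connected graph with the same hypotheses and the same target value. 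For such a $G$ one must assign the $|E(G)|-2$ cover cliques to distinct prey vertices, compatibly with a topological order, so that all but two vertices of $G$ are used as prey and only $|E(G)|-|V(G)|$ new vertices are needed. The idea is to traverse the long cycle $u_1u_2\cdots u_\ell u_1$ with $\ell\ge 4$, ordering its vertices and hosting each cycle edge at a strictly later vertex; because $\ell\ge 4$ there is enough room to send the two edges incident to the cycle's last vertex to two \emph{distinct} later prey, while the triangle, being a single clique of size three, is absorbed at one new prey vertex. Propagating this through the ears attached to the cycle (each additional independent cycle supplying exactly one further new prey vertex) and checking acyclicity and exact edge coverage should yield $k(G)\le|E(G)|-|V(G)|$, hence equality.

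The hard part is precisely this construction in the long-cycle case; the lower bounds and the no-long-cycle case fall out of results already available. The difficulty is to produce, in full generality, a topological order together with a clique-to-prey assignment that simultaneously covers every edge, creates no spurious edge, remains acyclic, and spends exactly $|E(G)|-|V(G)|$ new vertices, which amounts to verifying that the one-vertex ``saving'' from the triangle and the extra room provided by a cycle of length at least four can always be realized together, no matter how the triangle and the ears are attached to one another.
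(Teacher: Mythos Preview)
The paper does not prove this theorem at all; it is quoted verbatim as Corollary~7 of Kim and Roberts~\cite{KRTrian} and then used as a black box to derive Proposition~\ref{ctonetr}. So there is no proof in the paper to compare against.

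Evaluating your proposal on its own: the computation $\theta_E(G)=|E(G)|-2$, the treatment of the no-long-cycle case via chordality, and the lower bound $k(G)\ge |E(G)|-|V(G)|$ from Theorem~\ref{thm:O-UB} are all correct and cleanly argued. The reduction to the $2$-core is also sound, since pendant vertices lie on no cycle and repeated pendant deletion from a connected graph containing a cycle leaves a connected graph.

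The genuine gap is the upper bound in the long-cycle case. You do not construct the acyclic digraph; you describe an intention (``traverse the long cycle \ldots propagating this through the ears \ldots should yield'') and then explicitly list the unresolved difficulty in your final paragraph. That is a plan, not a proof. Concretely, the assertion that ``each additional independent cycle supplies exactly one further new prey vertex'' is exactly the heart of the matter and is not justified: one must specify a linear order on $V(G)$ and an injection from the $|E(G)|-2$ cover cliques into $V(G)\cup\{z_1,\ldots,z_{|E(G)|-|V(G)|}\}$ such that every vertex of a clique precedes its assigned prey. The interaction between the triangle and the long cycle is delicate---for instance, the triangle may share zero, one, or two vertices with the chosen long cycle, and the ears may branch in ways that force the ordering---and your sketch does not show how to handle these configurations. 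Until that construction is written out and verified to be acyclic with the correct competition graph, the long-cycle case remains open in your argument.
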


Let $G$ be a connected graph with exactly one triangle.
Then $\theta_E(G)=|E(G)|-2$. If $G$ has a cycle of length at least four,
then $k(G)=\theta_E(G)-|V(G)|+2$ by Theorem~\ref{onetrian}.
Otherwise, $k(G)=\theta_E(G)-|V(G)|+3$ by the same theorem.
Now we have a characterization for the connected competitively tight graphs
with exactly one triangle.

\begin{Prop}\label{ctonetr}
A connected graph with exactly one triangle is competitively tight
if and only if it has a cycle of length at least four. 
\end{Prop}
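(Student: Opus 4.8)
The plan is to observe that both the hypothesis-driven formula of Theorem~\ref{onetrian} and the defining equation of competitive tightness are expressed through the three quantities $|V(G)|$, $|E(G)|$, and $\theta_E(G)$, so the whole statement collapses to an equality check once $\theta_E(G)$ is determined.

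First I would pin down $\theta_E(G)=|E(G)|-2$ for a connected graph $G$ with exactly one triangle $T$. Since $T$ is a clique, it covers its three edges using a single member of a cover. Every other edge $e$ of $G$ lies in no triangle, because $T$ is the unique triangle; hence no clique of $G$ strictly larger than $K_2$ can contain both endvertices of $e$, so $e$ must be covered by the clique consisting of $e$ itself. Consequently an optimal edge clique cover consists of $T$ together with one singleton clique for each of the remaining $|E(G)|-3$ edges, giving $\theta_E(G)=1+(|E(G)|-3)=|E(G)|-2$. This is exactly the value recorded in the paragraph preceding the statement.

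With this value in hand, the defining condition $k(G)=\theta_E(G)-|V(G)|+2$ for competitive tightness becomes simply $k(G)=|E(G)|-|V(G)|$. Now I would invoke Theorem~\ref{onetrian} in its two cases. If $G$ has a cycle of length at least four, the theorem gives $k(G)=|E(G)|-|V(G)|$, which matches the rewritten condition, so $G$ is competitively tight. If $G$ has no cycle of length at least four, the theorem gives $k(G)=|E(G)|-|V(G)|+1$, which differs from $|E(G)|-|V(G)|$, so $G$ fails to be competitively tight. These two implications together yield the claimed equivalence.

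I do not anticipate a genuine obstacle here: the substantive content is entirely carried by Theorem~\ref{onetrian}, and the remaining work is the arithmetic identity $\theta_E(G)-|V(G)|+2=|E(G)|-|V(G)|$. The only step that calls for a short argument rather than pure bookkeeping is the evaluation $\theta_E(G)=|E(G)|-2$, where one must justify that no non-triangle edge can be absorbed into a larger clique; this is immediate from the assumption that the triangle is unique.
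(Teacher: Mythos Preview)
Your proposal is correct and follows essentially the same route as the paper: the argument in the paragraph preceding the proposition records $\theta_E(G)=|E(G)|-2$ and then reads off both cases directly from Theorem~\ref{onetrian}, exactly as you do. Your added justification for $\theta_E(G)=|E(G)|-2$ is sound and simply fills in what the paper states without proof.
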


Kim and Roberts~\cite{KRTrian} also determined the competition number
of a graph with exactly two triangles. 
To do so, they defined ${\rm VC}(G)$ for a graph $G$ as
\[
{\rm VC}(G) := \{v \in V(G) : v
\mbox{ is a vertex on a cycle of } G \}.
\]
Let $\mathcal{G}_1$ (resp. $\mathcal{G}_2$) be the family of graphs
that can be obtained
from Graph I (resp. one of the Graphs II-V)
in Figure~\ref{triangles}
by subdividing edges except those on triangles.

\begin{figure}
\begin{center}
\psfrag{A}{I}
\psfrag{B}{II}
\psfrag{C}{III}
\psfrag{D}{IV}
\psfrag{E}{V}
\includegraphics[scale=0.95]{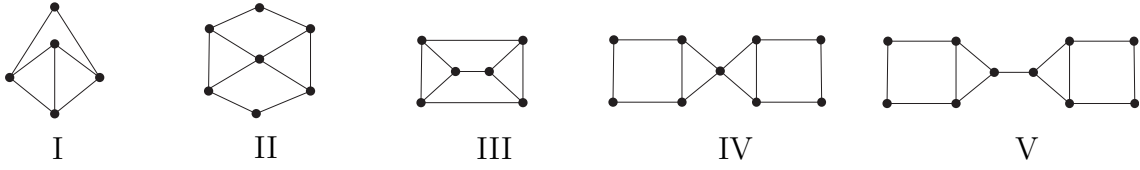}
\caption{Graphs with exactly two triangles}
\label{triangles}
\end{center}
\end{figure}

\begin{Thm}[{\cite[Theorem~9]{KRTrian}}]\label{thm:share}
Suppose that a connected graph $G$ has exactly two triangles
which share one of their edges. Then
\begin{itemize}
\item[{\rm (a)}]
$k(G)=|E(G)|-|V(G)|$ if $G$ is chordal or if
the subgraph induced by
${\rm VC}(G)$
is
in $\mathcal{G}_1$, and
\item[{\rm (b)}]
$k(G)=|E(G)|-|V(G)|-1$ otherwise.
\end{itemize}
\end{Thm}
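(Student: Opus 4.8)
The plan is to pin down $\theta_E(G)$ first and then push Opsut's inequality (Theorem~\ref{thm:O-UB}) in the two directions demanded by (a) and (b). Writing $uv$ for the shared edge and $w_1,w_2$ for the two apices, the five edges of the resulting diamond $K_4-e$ on $\{u,v,w_1,w_2\}$ are covered by the triangular cliques $\{u,v,w_1\}$ and $\{u,v,w_2\}$, while every remaining edge lies on no triangle and hence is its own maximal clique. Thus $\theta_E(G)=(|E(G)|-5)+2=|E(G)|-3$, and Theorem~\ref{thm:O-UB} already gives $|E(G)|-|V(G)|-1 \le k(G) \le |E(G)|-3$. In particular the lower bound needed in (b) is free, so the two genuine tasks are a matching construction for (b) and a one-unit improvement of the lower bound for (a).

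Before either, I would reduce to the cyclic part: by Proposition~\ref{prop:pendant} deleting a pendant vertex changes neither $k(G)$ nor $|E(G)|-|V(G)|$, so I may assume $G$ has minimum degree at least two and, more precisely, that $G$ is a subdivision of one of the Graphs I--V in which only non-triangle edges have been subdivided. This is harmless because subdividing a non-triangle edge leaves the number of triangles, the quantity $|E(G)|-|V(G)|$, and the membership ${\rm VC}(G)\in\mathcal{G}_1$ all unchanged, and it replaces $G$ by a graph on which the following constructions and counting run cleanest.

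For the upper bounds I would build an acyclic digraph directly. Fix an acyclic ordering of $V(G)$ together with the added isolated vertices and assign to each vertex, as its in-neighbourhood, a single clique of the cover $\{\,\{u,v,w_1\},\{u,v,w_2\}\,\}\cup\{\text{non-triangle edges}\}$, arranged so that each clique sits after all its members and no vertex hosts two cliques. The whole difficulty is the two triangular cliques, both containing $u,v$: in case (b) the core carries a \emph{simplicial} vertex (an apex $w_i$ with $N_G[w_i]$ a clique), and ordering it just before a trailing added vertex lets that added vertex host $w_i$'s triangle, after which the remaining cliques can be threaded along the rest of the graph so that exactly two vertices fail to host a clique; this gives $k(G)\le |E(G)|-3-(|V(G)|-2)=|E(G)|-|V(G)|-1$, meeting the lower bound. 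In case (a) the same scheme, run more loosely (sending each triangle to a separate added vertex), yields the weaker $k(G)\le |E(G)|-|V(G)|$.

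The hard part is the sharpened lower bound in (a), namely $k(G)\ge |E(G)|-|V(G)|$. For chordal $G$ this is easy: the cycle space is generated by the two triangles, so $|E(G)|-|V(G)|=1$, and $k(G)\ge 1$ because the sink of any acyclic realization must be an isolated vertex, which together with the chordal bound $k(G)\le 1$ closes the case. For $G$ with ${\rm VC}(G)\in\mathcal{G}_1$ one has $|E(G)|-|V(G)|=2$, and I would argue by contradiction: if $k(G)=|E(G)|-|V(G)|-1=1$, there is a single added isolated vertex $z$, necessarily the sink, so every edge incident to the last ordinary vertex $p$ is forced to be hosted at $z$; hence $N_G[p]$ is a clique and $p$ is simplicial. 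But Graph I and all of its non-triangle subdivisions are simplicial-free, a contradiction, so $k(G)\ge 2$. The main obstacles are precisely these two structural facts: that the $\mathcal{G}_1$-family contains no simplicial vertex (which forces the one-unit improvement in (a)), and that each of II--V carries a usable simplicial apex around which the tight construction in (b) can be organized uniformly.
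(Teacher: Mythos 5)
First, a point of comparison: the paper contains no proof of this statement at all --- it is quoted verbatim from Kim and Roberts \cite{KRTrian}, so your argument has to stand entirely on its own. Parts of it do stand: the computation $\theta_E(G)=|E(G)|-3$ (the two triangles cover the five diamond edges, every other edge is its own maximal clique), the observation that Theorem~\ref{thm:O-UB} then yields the lower bound $k(G)\geq |E(G)|-|V(G)|-1$ of case (b) for free, the chordal case of (a), and the sink argument for the $\mathcal{G}_1$ case (if $k(G)=1$, the vertex of $V(G)$ coming last in a topological order of an optimal acyclic realization must be simplicial, since all its edges must be hosted at the unique later vertex $z$) are all correct, modulo the checkable facts that members of $\mathcal{G}_1$ have $|E|-|V|=2$ and no simplicial vertex.

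The proposal breaks at the reduction step, and this failure propagates into everything that carries the real weight of the theorem. Deleting pendant vertices does reduce to minimum degree at least two, but it does \emph{not} reduce to ``a subdivision of one of Graphs I--V'': those form finitely many homeomorphism classes, each of a fixed cycle rank, whereas minimum-degree-two connected graphs with exactly two triangles sharing the edge $uv$ exist with every cycle rank $\geq 2$ --- e.g.\ take the diamond on $\{u,v,w_1,w_2\}$ and attach $r$ disjoint $5$-cycles at the apices, giving rank $2+r$. Choosing $r$ so that $2+r$ differs from the rank of Graph I, the result is not chordal and ${\rm VC}(G)$ induces a graph outside $\mathcal{G}_1$, so it lies squarely in case (b); yet it has \emph{no} simplicial vertex ($u$ and $v$ see the nonadjacent pair $w_1,w_2$, each apex has nonadjacent cycle-neighbours, and cycle vertices have nonadjacent neighbours). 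This refutes the structural claim on which your whole case-(b) construction is organized --- that ``the core carries a simplicial apex'' --- and note also that chordal graphs, which belong to case (a), \emph{do} have simplicial apices, so simpliciality cannot be the dividing line between (a) and (b) in either direction. With the reduction and the simplicial-apex claim gone, your upper bounds --- $k(G)\leq |E(G)|-|V(G)|-1$ in (b) and even the weaker $k(G)\leq |E(G)|-|V(G)|$ in (a), which sits one above Opsut's lower bound and is not automatic --- rest on an unsupported gesture (``threaded along the rest of the graph so that exactly two vertices fail to host a clique''). Exhibiting, for \emph{every} non-chordal graph with ${\rm VC}(G)$ outside $\mathcal{G}_1$, an acyclic ordering in which all but two of the $|V(G)|+k$ vertices host a clique of the cover is precisely the hard global content of Kim and Roberts' proof, which requires a genuine induction/case analysis on the cyclic structure; it cannot be dispatched by an ordering heuristic anchored at one simplicial vertex.
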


\begin{Thm}[{\cite[Theorem~10]{KRTrian}}]\label{thm:notshare}
Suppose that a connected graph $G$ has exactly two triangles
which are edge-disjoint. Then
\begin{itemize}
\item[{\rm (a)}]
$k(G)=|E(G)|-|V(G)|$ if $G$ is chordal,
\item[{\rm (b)}]
$k(G)=|E(G)|-|V(G)|-1$ if $G$ has
exactly one cycle of length at least four
as an induced
subgraph
or if
the subgraph induced by
${\rm VC}(G)$
is
in $\mathcal{G}_1 \cup \mathcal{G}_2$, and
\item[{\rm (c)}]
$k(G)=|E(G)|-|V(G)|-2$ otherwise.
\end{itemize}
\end{Thm}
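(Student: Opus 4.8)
The plan is to trap $k(G)$ between the two bounds of Theorem~\ref{thm:O-UB} and then determine exactly how much of the slack the two triangles consume. First I would compute the edge clique cover number. Since $G$ has exactly two triangles and they are edge-disjoint, those two $3$-cliques are the only cliques on three or more vertices (a fourth mutually adjacent vertex would create a third triangle), so the two triangles cover six edges and each of the remaining $|E(G)|-6$ edges must be covered by itself. Hence $\theta_E(G)=|E(G)|-4$. Feeding this into the lower bound of Theorem~\ref{thm:O-UB} already gives $k(G)\ge |E(G)|-|V(G)|-2$, which is exactly the value asserted in (c); so for part (c) only a matching upper bound remains, whereas for (a) and (b) the real work is to prove that this lower bound must be exceeded by $2$ and by $1$, respectively. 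Part (a) I would dispose of immediately: a connected chordal graph has $k(G)=1$, and since the cycle space of a chordal graph is spanned by its triangles, two edge-disjoint (hence independent) triangles force $|E(G)|-|V(G)|+1=2$, i.e. $k(G)=1=|E(G)|-|V(G)|$.

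Next I would install the invariance reductions that make the remaining two cases finite. The pendant-deletion identity recorded inside the proof of Proposition~\ref{prop:pendant} ($k(G-v)=k(G)$ for a pendant $v$ when $G\ne K_2$) decreases $|V(G)|$ and $|E(G)|$ by one each, leaving $k(G)$, $|E(G)|-|V(G)|$, and $G[{\rm VC}(G)]$ untouched. Complementing it with a companion lemma that subdividing an edge lying on no triangle also preserves $k(G)$ (such a subdivision keeps $|E(G)|-|V(G)|$ fixed), I could pass freely between $G$ and its pendant-stripped, un-subdivided form. Because $\mathcal G_1$ and $\mathcal G_2$ are by definition closed under precisely these subdivisions, this collapses the two infinite families to the finitely many seed graphs I--V, reducing the whole statement to a bounded list of cyclic cores together with the generic situation in which $G[{\rm VC}(G)]$ carries several independent long cycles.

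For the upper bounds I would use the clique-to-prey construction. Fix the edge clique cover above and a topological vertex order, and realize $G\cup I_k$ by drawing, for each clique $C$, an arc from every vertex of $C$ to a single prey vertex placed after all of $C$; each original vertex may host at most one clique, since its in-neighborhood is forced to be one clique, and every clique not hosted on an original vertex costs one vertex of $I_k$. Thus $k(G)=\theta_E(G)$ minus the number of cliques placed on original vertices, and everything reduces to maximizing placements under the precedence constraint. In the triangle-free-like optimum exactly two original vertices are wasted as prey (the first two in the order are too early to host any clique), which is case (c); but each triangle-clique demands three of its vertices before its prey and so threatens to waste one further vertex, and an induced cycle of length at least four running through that triangle supplies an early alternative prey that recovers the wasted slot. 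I would make this quantitative by exhibiting, for each seed in $\mathcal G_1\cup\mathcal G_2$ and for the ``exactly one induced long cycle'' configuration, an explicit order and assignment recovering one of the two threatened slots (giving $k(G)\le |E(G)|-|V(G)|-1$), and for every other non-chordal core an assignment recovering both (giving $k(G)\le |E(G)|-|V(G)|-2$).

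The hard part will be the lower bounds for (a) and (b), where I must show the threatened slots genuinely cannot be recovered under any edge clique cover and any ordering; Opsut's generic bound only delivers (c). I expect to need a refined counting argument that tracks how the two triangle-cliques must be assigned in every acyclic realization: the pivotal phenomenon is that, absent an induced long cycle incident to a triangle, that triangle's prey is pinned toward the end of any feasible order and a vertex is unavoidably wasted, with both triangles pinned in the chordal case. Converting this local ``pinning'' into a rigorous global inequality, and then verifying it uniformly across the seed graphs I--V so as to separate the members of $\mathcal G_1\cup\mathcal G_2$ (which lose one slot) from the remaining edge-disjoint-triangle cores (which lose none), is the delicate core of the proof.
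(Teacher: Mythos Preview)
The paper does not prove this theorem at all: it is quoted verbatim as \cite[Theorem~10]{KRTrian} and used as a black box in the proof of Theorem~\ref{thm:twotri}. There is therefore no ``paper's own proof'' to compare your proposal against; the only trace of the argument inside the present paper is the single-line observation (in the proof of Theorem~\ref{thm:twotri}) that $\theta_E(G)=|E(G)|-4$ when the two triangles are edge-disjoint, which you reproduce correctly.

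As a standalone outline your plan is structurally sound---computing $\theta_E(G)=|E(G)|-4$, invoking Opsut's lower bound for~(c), using $k=1$ plus cycle-rank $=2$ for the chordal case~(a), and reducing to finitely many cores via pendant-deletion and subdivision invariance---and this is indeed the skeleton of the Kim--Roberts argument. But you have correctly identified, without resolving, the genuine difficulty: the \emph{lower} bounds in (a) and~(b). Your ``pinning'' heuristic (a triangle with no incident hole forces a wasted prey slot) is the right intuition, yet turning it into a proof requires a careful case analysis of how the two triangle-cliques can be assigned to preys in an arbitrary acyclic realization, and you would need to rule out not just the canonical edge clique cover but every cover. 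The Kim--Roberts paper spends most of its length on exactly this; your proposal stops at the point where the real combinatorics begins, so as written it is a plausible plan rather than a proof.
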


\noindent
From these two theorems, 
we may characterize the competitively tight graphs 
with exactly two triangles. 
A cycle of length at least four in a graph $G$ 
is called a \emph{hole} of $G$ 
if it is an induced subgraph of $G$. 
The number of holes of a graph is closely related to 
its competition number (see \cite{KLPS, MWW}).

\begin{Thm}\label{thm:twotri}
A connected graph $G$ with exactly two triangles is competitively tight
if and only if $G$ is not chordal and satisfies one of the following:
\begin{itemize}
\item[{\rm (i)}]
the two triangles share one of their edges
and
the subgraph induced by ${\rm VC}(G)$
is not in $\mathcal{G}_1$;
\item[{\rm (ii)}]
the two triangles are edge-disjoint,
$G$ contains at least two holes,
and
the subgraph induced by ${\rm VC}(G)$
is not in $\mathcal{G}_1 \cup \mathcal{G}_2$.
\end{itemize}
\end{Thm}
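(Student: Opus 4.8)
The plan is to turn the stated biconditional into a one-line computation once the edge clique cover number of $G$ is known, and then to match the resulting value of $k(G)$ against the case distinctions in Theorems~\ref{thm:share} and~\ref{thm:notshare}. The first and most substantial step is therefore to compute $\theta_E(G)$ for a connected graph $G$ with exactly two triangles. Since having exactly two triangles forbids a $K_4$ (a $K_4$ would contain four triangles), the only cliques of $G$ of size at least three are the two triangles themselves; every edge lying on no triangle is a maximal clique of size two and must be covered by itself. Hence an optimal edge clique cover is obtained by taking the two triangles together with one $K_2$ for each non-triangle edge. When the two triangles share an edge they span exactly $5$ distinct edges, so $\theta_E(G) = 2 + (|E(G)| - 5) = |E(G)| - 3$; when they are edge-disjoint they span $6$ edges, so $\theta_E(G) = |E(G)| - 4$.

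With $\theta_E(G)$ in hand I would substitute into the defining identity $k(G) = \theta_E(G) - |V(G)| + 2$. In the shared-edge case competitive tightness is equivalent to $k(G) = |E(G)| - |V(G)| - 1$, and in the edge-disjoint case to $k(G) = |E(G)| - |V(G)| - 2$. It then suffices to determine from the Kim--Roberts theorems exactly when $k(G)$ attains the smallest value it can take there.

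For two triangles sharing an edge, Theorem~\ref{thm:share} yields $k(G) = |E(G)| - |V(G)| - 1$ precisely in part~(b), that is, when $G$ is neither chordal nor has ${\rm VC}(G)$ inducing a graph in $\mathcal{G}_1$; negating part~(a) gives ``$G$ is not chordal \emph{and} the subgraph induced by ${\rm VC}(G)$ is not in $\mathcal{G}_1$'', which is the global hypothesis together with condition~(i). For two edge-disjoint triangles, Theorem~\ref{thm:notshare} yields $k(G) = |E(G)| - |V(G)| - 2$ precisely in part~(c), the negation of parts~(a) and~(b): $G$ is not chordal, $G$ does not have exactly one hole, and ${\rm VC}(G)$ does not induce a graph in $\mathcal{G}_1 \cup \mathcal{G}_2$. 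Here I would invoke the standard fact that a graph is chordal if and only if it has no hole, so that ``not chordal'' means ``at least one hole''; combined with ``not exactly one hole'' this collapses to ``at least two holes'', giving condition~(ii). Since two distinct triangles share either exactly one edge or none, these two cases exhaust all graphs with exactly two triangles, and the characterization follows.

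The step I expect to be the main obstacle is the optimality half of the $\theta_E(G)$ computation---verifying that the $5$ (respectively $6$) triangle edges cannot be covered by fewer than two cliques and that the non-triangle edges genuinely force one clique apiece; both points reduce to the absence of a $K_4$, but they must be stated carefully, since the entire count of $\theta_E(G)$, and hence the precise shift in the tightness condition, depends on them. Everything after the computation is bookkeeping, the only delicate part being the hole-counting simplification in the edge-disjoint case.
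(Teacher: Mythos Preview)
Your proposal is correct and follows essentially the same route as the paper: compute $\theta_E(G)$ as $|E(G)|-3$ or $|E(G)|-4$ according to whether the triangles share an edge, then read off from Theorems~\ref{thm:share} and~\ref{thm:notshare} when $k(G)$ equals the resulting value $\theta_E(G)-|V(G)|+2$. You are in fact more careful than the paper, which simply asserts the values of $\theta_E(G)$ without the $K_4$-free justification you supply; your hole-counting simplification in the edge-disjoint case is also handled exactly as needed.
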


\begin{proof}
Let $\Delta_1$ and $\Delta_2$ be the two triangles of $G$.
If $\Delta_1$ and $\Delta_2$ share an edge,
then $\theta_E(G)=|E(G)|-3$.
If $\Delta_1$ and $\Delta_2$ are edge-disjoint,
then $\theta_E(G)=|E(G)|-4$.
First, we show the ``if" part.
If (i) holds, then $G$ satisfies
the hypothesis of Theorem~\ref{thm:share} (b)
and so $k(G)=|E(G)|-|V(G)|-1=\theta_E(G)-|V(G)|+2$.
If (ii) holds, then $G$ satisfies
the hypothesis of Theorem~\ref{thm:notshare} (c),
$k(G)=|E(G)|-|V(G)|-2=\theta_E(G)-|V(G)|+2$.
Second, we show the ``only if" part by contradiction.
Suppose that $G$ is chordal.
Then, by Theorems~\ref{thm:share} and \ref{thm:notshare},
$k(G)=|E(G)|-|V(G)|$ or $|E(G)|-|V(G)|-1$,
none of which equals $\theta_E(G)-|V(G)|+2$.
Thus if $G$ is chordal, then $G$ is not competitively tight.
Suppose that neither (i) nor (ii) holds.
We consider the case where $\Delta_1$ and $\Delta_2$ share an edge.
Then, by Theorem~\ref{thm:share} (a), ${\rm VC}(G)$
induces
a graph in $\mathcal{G}_1$ and so $k(G)=|E(G)|-|V(G)|$,
which does not equal $(|E(G)|-3)-|V(G)|+2=\theta_E(G)-|V(G)|+2$.
Now we consider the case where $\Delta_1$ and $\Delta_2$ are edge-disjoint.
Then $G$ contains exactly one hole or ${\rm VC}(G)$
induces
a graph in $\mathcal{G}_1 \cup \mathcal{G}_2$.
By Theorem~\ref{thm:notshare} (b),
$k(G)=|E(G)|-|V(G)|-1 \neq (|E(G)|-4)-|V(G)|+2=\theta_E(G)-|V(G)|+2$.
\end{proof}

It does not seem to be easy to characterize the competitively tight graph
with exactly three triangles.
Yet, we can show that there exists a competitively tight graph
with exactly $n$ triangles for each nonnegative integer $n$. 
We first give a new upper bound which improves the one given
in Theorem~\ref{thm:O-UB}.  
Let $G$ be a graph and $F$ be a subset of the edge set of $G$. 
We denote by $\theta_E(F;G)$ 
the minimum size of a family $\mathcal{S}$ of cliques of $G$ 
such that each edge in $F$ is covered by some clique in the family 
$\mathcal{S}$ 
(cf. \cite{Sano}). 
We also need to introduce some notations.
For a graph $G$, we define 
\begin{eqnarray*}
E_{\triangle}(G) &:=& \{e \in E(G) : e
\text{ is contained in a triangle in } G \}, \\
\overline{E}_{\triangle}(G) &:=& \{e \in E(G) : e
\text{ is not contained in any triangle in } G \}.
\end{eqnarray*}
Note that
$E_{\triangle}(G) \cup \overline{E}_{\triangle}(G) = E(G)$ and
$E_{\triangle}(G) \cap \overline{E}_{\triangle}(G) = \emptyset$,
and we can easily check the following lemma from the definitions.

\begin{Lem}\label{lem:ECC}
For any graph $G$,
$\theta_E(G) = \theta_E(E_{\triangle}(G); G)
+ |\overline{E}_{\triangle}(G)|$.
\end{Lem}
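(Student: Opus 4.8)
The plan is to prove Lemma~\ref{lem:ECC} directly from the definitions by exhibiting a two-way inequality between $\theta_E(G)$ and $\theta_E(E_{\triangle}(G);G) + |\overline{E}_{\triangle}(G)|$. The key structural observation is that an edge $e \in \overline{E}_{\triangle}(G)$, being contained in no triangle, cannot lie in any clique of size three or more; hence the only clique of $G$ covering such an edge is the edge $e$ itself (viewed as a $2$-vertex clique). This forces any edge clique cover to treat the triangle-free edges and the triangle edges almost independently, which is exactly what the identity records.

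First I would prove the inequality $\theta_E(G) \le \theta_E(E_{\triangle}(G);G) + |\overline{E}_{\triangle}(G)|$. Take a family $\mathcal{S}$ of cliques covering every edge of $E_{\triangle}(G)$ with $|\mathcal{S}| = \theta_E(E_{\triangle}(G);G)$, and augment it by adding, for each edge $e \in \overline{E}_{\triangle}(G)$, the $2$-vertex clique consisting of the two endvertices of $e$. The resulting family covers every edge of $E(G) = E_{\triangle}(G) \cup \overline{E}_{\triangle}(G)$ and has size $\theta_E(E_{\triangle}(G);G) + |\overline{E}_{\triangle}(G)|$, so it is an edge clique cover of $G$; this gives the desired upper bound on $\theta_E(G)$.

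For the reverse inequality $\theta_E(G) \ge \theta_E(E_{\triangle}(G);G) + |\overline{E}_{\triangle}(G)|$, I would start from a minimum edge clique cover $\mathcal{T}$ of $G$ with $|\mathcal{T}| = \theta_E(G)$. The main point is to separate $\mathcal{T}$ into the cliques that cover triangle-free edges and those that cover the rest. Each edge $e \in \overline{E}_{\triangle}(G)$ must be covered by a clique of $\mathcal{T}$ that, as noted above, can only be the $2$-vertex clique $\{x,y\}$ where $e = xy$; such a clique covers no other edge of $G$ and in particular covers no triangle edge. Thus the cliques of $\mathcal{T}$ used to cover the $|\overline{E}_{\triangle}(G)|$ triangle-free edges are pairwise distinct and disjoint from the cliques covering $E_{\triangle}(G)$, yielding $\theta_E(G) \ge \theta_E(E_{\triangle}(G);G) + |\overline{E}_{\triangle}(G)|$.

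The only subtlety—and the step I would state most carefully—is the claim that a triangle-free edge admits no covering clique other than itself; this is immediate since a clique of size $\ge 3$ containing $x$ and $y$ would yield a third vertex adjacent to both, producing a triangle through $e$ and contradicting $e \in \overline{E}_{\triangle}(G)$. Because the paper already characterizes this as something one ``can easily check from the definitions,'' I do not expect a genuine obstacle here; the proof is a short bookkeeping argument once this disjointness is established.
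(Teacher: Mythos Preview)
Your argument is correct; the two-way inequality is exactly the right structure, and the key observation that any clique covering an edge of $\overline{E}_{\triangle}(G)$ must be that edge itself is stated and justified clearly. The paper does not give an explicit proof of this lemma at all---it simply remarks that one ``can easily check'' it from the definitions---so your write-up is more detailed than what appears there but follows the same obvious line of reasoning.
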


Now we present a new upper bound for the competition number of a graph.

\begin{Thm}\label{thm:main}
For any graph $G$,
\[
k(G) \leq \theta_E(E_{\triangle}(G); G)
+ \max \{ \min \{ 1,|\overline{E}_{\triangle}(G)| \},
|\overline{E}_{\triangle}(G)| - |V(G)| + 2\}.
\]
\end{Thm}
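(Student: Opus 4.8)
The plan is to prove the stated upper bound by exhibiting an acyclic digraph $D$ whose competition graph is $G$ together with exactly $\theta_E(E_\triangle(G);G)+k(H)$ isolated vertices, where $H$ denotes the spanning subgraph of $G$ with edge set $\overline{E}_\triangle(G)$. Note that $H$ is triangle-free by the very definition of $\overline{E}_\triangle(G)$, and that $|V(H)|=|V(G)|=:n$ and $|E(H)|=|\overline{E}_\triangle(G)|=:b$. I would build $D$ by gluing together two pieces: one that realizes the non-triangle edges optimally, and one that realizes the triangle edges through a minimum clique cover.

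Concretely, first fix a minimum family $\{C_1,\ldots,C_a\}$ of cliques of $G$ covering every edge of $E_\triangle(G)$, so that $a=\theta_E(E_\triangle(G);G)$. Next, because $H$ is triangle-free, choose an acyclic digraph $D_H$ on a vertex set $V(G)\cup I$ with $|I|=k(H)$ and $C(D_H)=H\cup I_{k(H)}$. Then form $D$ from $D_H$ by adjoining $a$ new vertices $z_1,\ldots,z_a$ and, for each $j$, the arcs $(x,z_j)$ for all $x\in C_j$; thus $V(D)=V(G)\cup I\cup\{z_1,\ldots,z_a\}$ and the arc set of $D$ is that of $D_H$ together with these new arcs.

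The core of the argument is to check that $D$ does the job. For acyclicity I would take any topological ordering witnessing acyclicity of $D_H$ and append $z_1,\ldots,z_a$ at the very end; since each $z_j$ is a sink all of whose in-neighbors lie in $V(G)$, no directed cycle can arise. For the competition graph I would use that the edge set of $C(D)$ is the union, over all vertices $p$ of $D$, of the edge sets of the complete graphs on the in-neighborhoods of $p$: every prey in $V(G)\cup I$ keeps its $D_H$-in-neighborhood and so these vertices contribute exactly $E(H)=\overline{E}_\triangle(G)$, while each $z_j$ contributes the edges of the clique $C_j\subseteq E(G)$. As $\bigcup_j C_j$ covers $E_\triangle(G)$ and each $C_j$ is a clique of $G$, the total contribution lies between $E_\triangle(G)$ and $E(G)$, so after union with $\overline{E}_\triangle(G)$ the edges of $C(D)$ inside $V(G)$ are exactly $E(G)$, while the added vertices $I\cup\{z_1,\ldots,z_a\}$ acquire no edges and remain isolated. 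Hence $C(D)=G\cup I_{k(H)+a}$, giving
\[
k(G)\le \theta_E(E_\triangle(G);G)+k(H).
\]

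It then remains to bound $k(H)$ via Theorem~\ref{thm:triangle-free}, applied to the triangle-free graph $H$, and to verify in each of its three cases that $k(H)\le\max\{\min\{1,b\},\,b-n+2\}$: when $H$ has no isolated vertex this holds with equality (since $b\ge1$ forces $\min\{1,b\}=1$), and when $H$ has an isolated vertex, or $n=1$, the right-hand side only increases, so the inequality is safe with room to spare. Substituting into the displayed bound yields the theorem. I expect the main obstacle to be the bookkeeping in the competition-graph verification—ensuring that superimposing the triangle-clique arcs onto $D_H$ neither creates an edge outside $E(G)$ nor de-isolates any added vertex—together with the routine but case-sensitive matching of the triangle-free formula against the maximum expression in the statement.
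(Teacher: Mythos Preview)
Your approach is essentially identical to the paper's: you build the same auxiliary graph $H=G-E_\triangle(G)$, take an optimal acyclic digraph realizing $H\cup I_{k(H)}$, and append one new sink per clique in a minimum cover of $E_\triangle(G)$, then bound $k(H)$ via Theorem~\ref{thm:triangle-free}. The only difference is that you spell out the acyclicity and competition-graph verifications and the case split for $k(H)$ in more detail than the paper does.
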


\begin{proof}
Let $H$ be the graph obtained from $G$
by deleting the edges in $E_{\triangle}(G)$, i.e.,
$H:=G-E_{\triangle}(G)$. Then $H$ is triangle-free and so,
by Theorem~\ref{thm:triangle-free},
\[
k(H) \leq \max \{ \min \{ 1,|E(H)| \}, |E(H)| - |V(H)| + 2\}.
\]
Since $V(H)=V(G)$, $E(H)=\overline{E}_{\triangle}(G)$,
the above inequality is equivalent to
\[
k(H) \leq \max \{ \min \{ 1,|\overline{E}_{\triangle}(G)| \},
|\overline{E}_{\triangle}(G)| - |V(G)| + 2\}.
\]
Let $D^-$ be an acyclic digraph such that
$C(D^-)=H \cup I_{k(H)}$.
Let $\mathcal{S}$ be 
a family of cliques of $G$ of size $\theta_E(E_{\triangle}(G); G)$ 
such that each edge in $E_{\triangle}(G)$ 
is covered by some clique in $\mathcal{S}$. 
We define a digraph $D$ by
\[
V(D) := V(D^-) \cup \{z_S : S \in \mathcal{S} \}
\quad \text{ and } \quad
A(D) := A(D^-) \cup \bigcup_{S \in \mathcal{S}} \{(v, z_S) : v \in S \}.
\]
Then $D$ is acyclic, and
$C(D) = G \cup I_{k(H)} \cup \{z_S : S \in \mathcal{S} \}$.
Therefore
\[
k(G) \leq |\mathcal{S}| + k(H)
\leq \theta_E(E_{\triangle}(G); G)
+ \max \{ \min \{ 1,|\overline{E}_{\triangle}(G)| \},
|\overline{E}_{\triangle}(G)| - |V(G)| + 2\}.
\]
Thus the theorem is true.
\end{proof}

\begin{Rem}
The upper bound given in Theorem \ref{thm:main}
is always better than the upper bound in Theorem \ref{thm:O-UB}.
Indeed, the following inequality holds for any graph $G$
\[
\theta_E(E_{\triangle}(G); G)
+ \max \{ \min \{ 1,|\overline{E}_{\triangle}(G)| \},
|\overline{E}_{\triangle}(G)| - |V(G)| + 2\}
\leq \theta_E(G).
\]
\end{Rem}

\begin{proof}
If $|\overline{E}_{\triangle}(G)|=0$,
then the left hand side of the above inequality is
equal to
$\theta_E(E_{\triangle}(G); G)$
which is less than or equal to $\theta_E(G)$.
Now suppose that $|\overline{E}_{\triangle}(G)| \geq 1$.
Then $\min\{1,|\overline{E}_\triangle(G)|\}=1$
and the left hand side is equal to
$\theta_E(E_{\triangle}(G); G) +1$
or
$\theta_E(E_{\triangle}(G); G)
+ |\overline{E}_{\triangle}(G)| - |V(G)| + 2$.
In addition, $|V(G)| \geq 2$.
Since $|\overline{E}_{\triangle}(G)| \geq 1$
and $|V(G)| \geq 2$,
both $\theta_E(E_{\triangle}(G); G) +1$
and
$\theta_E(E_{\triangle}(G); G)
+ |\overline{E}_{\triangle}(G)| - |V(G)| + 2$
are less than or equal to $ \theta_E(E_{\triangle}(G); G)
+ |\overline{E}_{\triangle}(G)|$.
Thus, the inequality holds by Lemma \ref{lem:ECC}.
\end{proof}

As a corollary of Theorem~\ref{thm:main},
we obtain the following result
which gives a sufficient condition
for graphs to be competitively tight.

\begin{Cor} \label{sufficient}
If a graph $G$ satisfies
$|\overline{E}_{\triangle}(G)| \geq |V(G)| -i(G)- 1$ 
and $i(G) \leq k(G)$ 
where $i(G)$ is the number of isolated vertices of $G$,
then
$G$ is competitively tight.
\end{Cor}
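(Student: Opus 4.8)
The plan is to derive competitive tightness from the new upper bound of Theorem~\ref{thm:main}, after first stripping off the isolated vertices and handling them separately via Proposition~\ref{prop:isolated}. Write $n=|V(G)|$, $i=i(G)$, and $m=|\overline{E}_{\triangle}(G)|$, and let $G'$ be the graph obtained from $G$ by deleting its $i$ isolated vertices, so that $G=G'\cup I_i$ and $G'$ itself has no isolated vertices. First I would record what is preserved under this deletion: since an isolated vertex lies on no edge and in no clique that covers an edge, we have $|V(G')|=n-i$, $\theta_E(G')=\theta_E(G)$, $E_{\triangle}(G')=E_{\triangle}(G)$, and $|\overline{E}_{\triangle}(G')|=m$. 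In particular the hypothesis $m\geq n-i-1$ becomes $|\overline{E}_{\triangle}(G')|\geq |V(G')|-1$.

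Next I would show that $G'$ is competitively tight by applying Theorem~\ref{thm:main} to $G'$. Because $|\overline{E}_{\triangle}(G')|\geq |V(G')|-1$, the term $|\overline{E}_{\triangle}(G')|-|V(G')|+2$ is at least $1$ and hence is at least $\min\{1,|\overline{E}_{\triangle}(G')|\}$, so the maximum appearing in Theorem~\ref{thm:main} equals $|\overline{E}_{\triangle}(G')|-|V(G')|+2$. Substituting the identity $\theta_E(G')=\theta_E(E_{\triangle}(G');G')+|\overline{E}_{\triangle}(G')|$ of Lemma~\ref{lem:ECC} then gives $k(G')\leq \theta_E(G')-|V(G')|+2$. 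Since the reverse inequality is the lower bound of Theorem~\ref{thm:O-UB}, equality holds and $G'$ is competitively tight.

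Finally I would transfer competitive tightness from $G'$ to $G=G'\cup I_i$ using Proposition~\ref{prop:isolated}, whose hypothesis requires $0\leq i\leq k(G')$. Establishing $i\leq k(G')$ is the step I expect to be the main obstacle, and it is precisely where the second assumption $i\leq k(G)$ is used. I would argue by contradiction: if $i>k(G')$, then $G'\cup I_{k(G')}$ is already the competition graph of an acyclic digraph and adjoining the remaining $i-k(G')$ isolated vertices (as sinks) keeps it so, whence $k(G)=k(G'\cup I_i)=0$; this is also consistent with Lemma~\ref{lem-2}, since $k(G')-i<0$. The hypothesis $i\leq k(G)=0$ then forces $i=0$, contradicting $i>k(G')\geq 0$. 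Thus $i\leq k(G')$, and Proposition~\ref{prop:isolated} yields that $G=G'\cup I_i$ is competitively tight.

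The conceptual point behind the two hypotheses is complementary: the condition $|\overline{E}_{\triangle}(G')|\geq |V(G')|-1$ guarantees that the non-triangle part of $G'$ carries enough edges to make the bound of Theorem~\ref{thm:main} tight, while $i\leq k(G)$ guarantees that the isolated vertices can be absorbed without overshooting the lower bound. The only points needing extra care are mild degenerate instances, such as an edgeless graph; under the standing convention that $G$ is nonempty these fall outside the relevant range of the hypotheses or can be checked directly.
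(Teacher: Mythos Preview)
Your proposal is correct and follows essentially the same route as the paper: delete the isolated vertices to obtain $G'$, show $G'$ is competitively tight by combining Theorem~\ref{thm:main}, Lemma~\ref{lem:ECC}, and Theorem~\ref{thm:O-UB}, and then recover $G=G'\cup I_i$ via Proposition~\ref{prop:isolated}. Your contradiction argument for $i\leq k(G')$ is in fact a bit more explicit than the paper's appeal to Lemma~\ref{lem-2}, but the underlying content is the same.
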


\begin{proof}
Let $G'$ be the graph obtained by deleting the isolated vertices from $G$. 
Since $i(G) \leq k(G)$, it follows from Lemma~\ref{lem-2} that 
$k(G)=k(G')-i(G)$. 
Since $k(G) \geq 0$, 
we have $i(G) \leq k(G')$. 
Thus, by Proposition~\ref{prop:isolated}, it is sufficient 
to show that $G'$ is competitively tight.
Since $|\overline{E}_{\triangle}(G')|=|\overline{E}_{\triangle}(G)|$ 
and $|V(G')|=|V(G)|-i(G)$, 
we have $|\overline{E}_{\triangle}(G')| \geq |V(G')|-1$ 
and so $|\overline{E}_{\triangle}(G')| - |V(G')| + 2 \geq 1 \geq
\min \{ 1,|\overline{E}_{\triangle}(G')| \}$.
By Lemma \ref{lem:ECC} and Theorem \ref{thm:main},
$k(G') \leq
\theta_E(E_{\triangle}(G'); G')
+ |\overline{E}_{\triangle}(G')| - |V(G')| + 2
= \theta_E(G') -|V(G')| +2$.
By Theorem~\ref{thm:O-UB},
we obtain
$k(G') = \theta_E(G') -|V(G')| +2$.
\end{proof}

\begin{figure}
\begin{center}
\psfrag{1}{\footnotesize$v_1$}
\psfrag{2}{\footnotesize$v_2$}
\psfrag{3}{\footnotesize$v_3$}
\psfrag{4}{\footnotesize$v_4$}
\psfrag{5}{\footnotesize$v_5$}
\psfrag{6}{\footnotesize$v_6$}
\psfrag{7}{\footnotesize$v_7$}
\psfrag{8}{\footnotesize$v_8$}
\psfrag{9}{\footnotesize$v_9$}
\psfrag{0}{\footnotesize$v_{10}$}
\psfrag{a}{\footnotesize$v_{11}$}
\psfrag{b}{\footnotesize$v_{12}$}
\psfrag{c}{\footnotesize$v_{13}$}
\psfrag{d}{\footnotesize$v_{14}$}
\psfrag{e}{\footnotesize$v_{15}$}
\psfrag{f}{\footnotesize$v_{16}$}
\psfrag{g}{\footnotesize$v_{17}$}
\psfrag{h}{\footnotesize$v_{18}$}
\psfrag{19}{\footnotesize$v_{19}$}
\psfrag{20}{\footnotesize$v_{20}$}
\psfrag{21}{\footnotesize$v_{21}$}
\psfrag{22}{\footnotesize$v_{22}$}
\psfrag{23}{\footnotesize$v_{23}$}
\psfrag{24}{\footnotesize$v_{24}$}
\psfrag{25}{\footnotesize$v_{25}$}
\psfrag{26}{\footnotesize$v_{26}$}
\psfrag{27}{\footnotesize$v_{27}$}
\psfrag{28}{\footnotesize$v_{28}$}
\psfrag{29}{\footnotesize$v_{29}$}
\psfrag{30}{\footnotesize$v_{30}$}
\psfrag{31}{\footnotesize$v_{31}$}
\psfrag{32}{\footnotesize$v_{32}$}
\psfrag{33}{\footnotesize$v_{33}$}
\psfrag{34}{\footnotesize$v_{34}$}
\psfrag{35}{\footnotesize$v_{35}$}
\psfrag{36}{\footnotesize$v_{36}$}
\includegraphics{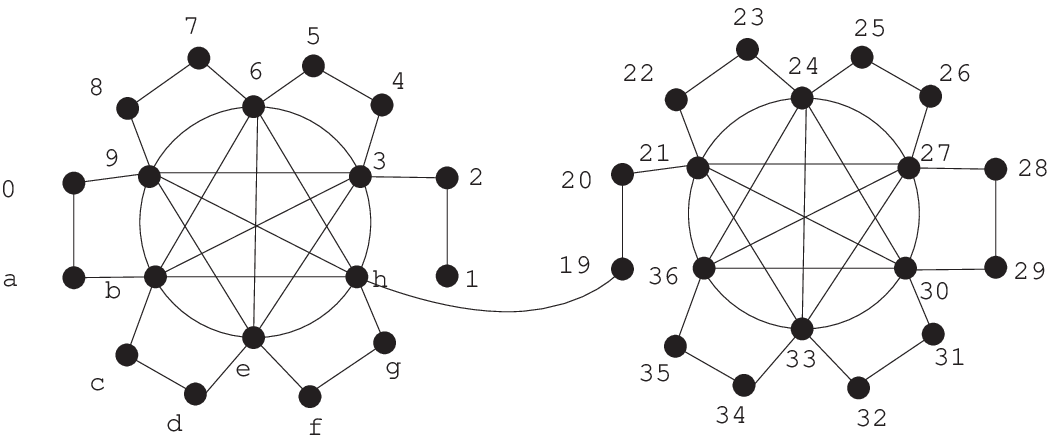}
\end{center}
\caption{$G_{6,2}$.}\label{fg}
\end{figure}

We present a family of graphs satisfying
the sufficient condition for a graph being competitively tight.
Let $t$ and $n$ be positive integers with $t \geq 3$.
Let $G_{t,n}$ be the connected graph defined by
\begin{eqnarray*}
V(G_{t,n}) &=& \{v_1, \ldots, v_{3tn} \}, \\
E(G_{t,n}) &=& \{v_i v_{i+1} : 1 \leq i \leq 3tn-1 \} 
\cup \bigcup_{m=0}^{n-1}\{v_{3tm+3i} v_{3tm+3j}
: 1 \leq i < j \leq t \}
\end{eqnarray*}
(see Figure~\ref{fg}).
It is easy to check that $\overline{E}_\triangle(G_{t,n})$
is the Hamilton path $v_1v_2 \ldots v_{3tn}$  of $G_{t,n}$
and so $|\overline{E}_\triangle(G_{t,n})|=|V(G)|-1$.

On the other hand,
each of the edges on the Hamilton path $v_1v_2 \cdots v_{3tn}$
forms a maximal clique. Other than those cliques,
$\{v_{3tm+3i}v_{3tm+3j} : 1 \leq i < j \leq t\}$
is a maximal clique for each $m$, $0 \leq m \leq n-1$.
It can easily be seen that these maximal cliques form
an edge clique cover whose size is minimum among
all edge clique covers of $G_{t,n}$,
which implies that $\theta_E(G_{t,n})=(3tn-1) + n$.
Thus, by Corollary~\ref{sufficient},
\[
k(G) =(3tn+n-1)-3tn+2=n+1.
\]

For any positive integer $n$,
let $G=G_{3,n}$.
Then $v_{9i+3} v_{9i+6} v_{9i+9}$ are the only triangles of $G$
$(0 \leq i \leq n-1)$ and so $G$ has exactly $n$ triangles.
As we have shown,
it holds that $k(G) =n+1= \theta_E(G)-|V(G)|+2$.
Hence $G$ is a competitively tight graph with exactly $n$ triangles.

It is also possible that a competitively tight graph
has a clique of any size:
For any positive integer $t$ with $t \geq 3$,
let $G = G_{t,1}$.
Then $S=\{v_{3i} \in V : 1 \leq i \leq t \}$ is a clique of size $t$ of $G$.
As we have shown,
it holds that $k(G) =2= \theta_E(G)-|V(G)|+2$.
Hence $G$ is competitively tight.

The following gives a necessary condition
for graphs to be competitively tight.

\begin{Prop}\label{prop:nec}
If a graph $G$ is competitively tight,
then $|\overline{E}_{\triangle}(G)|
\geq |V(G)| - \theta_E(E_{\triangle}(G); G) -2$.
\end{Prop}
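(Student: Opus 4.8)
The plan is to combine the defining equation of competitive tightness with the trivial fact that the competition number is nonnegative, and then to rewrite $\theta_E(G)$ using Lemma~\ref{lem:ECC}. Since $G$ is competitively tight, by definition $k(G) = \theta_E(G) - |V(G)| + 2$. Because $k(G)$ is, by its very definition, a number of isolated vertices adjoined to $G$, we always have $k(G) \geq 0$. Substituting the defining equation into this inequality yields $\theta_E(G) - |V(G)| + 2 \geq 0$, i.e.\ $\theta_E(G) \geq |V(G)| - 2$.

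The second step is to decompose $\theta_E(G)$. By Lemma~\ref{lem:ECC} we have $\theta_E(G) = \theta_E(E_{\triangle}(G); G) + |\overline{E}_{\triangle}(G)|$, so the inequality $\theta_E(G) \geq |V(G)| - 2$ becomes $\theta_E(E_{\triangle}(G); G) + |\overline{E}_{\triangle}(G)| \geq |V(G)| - 2$. Rearranging the terms gives exactly $|\overline{E}_{\triangle}(G)| \geq |V(G)| - \theta_E(E_{\triangle}(G); G) - 2$, which is the desired conclusion.

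One subtlety worth flagging is that the seemingly natural route, namely feeding the competitive-tightness equality into the new upper bound of Theorem~\ref{thm:main}, leads nowhere: after cancelling the common term $\theta_E(E_{\triangle}(G); G)$ and applying Lemma~\ref{lem:ECC}, one is left with the tautology $|\overline{E}_{\triangle}(G)| - |V(G)| + 2 \leq \max\{\min\{1,|\overline{E}_{\triangle}(G)|\}, |\overline{E}_{\triangle}(G)| - |V(G)| + 2\}$, since the right-hand maximum already contains the left-hand expression. Thus the correct lever is not Theorem~\ref{thm:main} but the elementary bound $k(G) \geq 0$, and recognizing this is really the only conceptual point; the remainder is a one-line substitution. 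I expect no genuine obstacle beyond choosing this lever, and the proof should be short.
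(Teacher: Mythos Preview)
Your proof is correct and follows essentially the same route as the paper: combine the competitive-tightness equality $k(G)=\theta_E(G)-|V(G)|+2$ with $k(G)\geq 0$, then substitute $\theta_E(G)=\theta_E(E_{\triangle}(G);G)+|\overline{E}_{\triangle}(G)|$ from Lemma~\ref{lem:ECC} and rearrange. Your extra paragraph explaining why Theorem~\ref{thm:main} is a dead end here is not needed for the argument, but the observation is accurate.
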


\begin{proof}
Since $G$ is competitively tight,
$k(G)=\theta_E(G)-|V(G)|+2$ holds.
By Lemma \ref{lem:ECC},
we have
$\theta_E(E_{\triangle}(G); G)
+ |\overline{E}_{\triangle}(G)|-|V(G)|+2
= \theta_E(G)-|V(G)|+2 =k(G) \geq 0$.
Hence $|\overline{E}_{\triangle}(G)|
\geq |V(G)| - \theta_E(E_{\triangle}(G); G) -2$.
\end{proof}

It follows from Corollary~\ref{sufficient}
that any graph $G$ having exactly three triangles 
and having no isolated vertices is competitively tight
if it satisfies $|E(G)| \geq |V(G)|+8$.
To see why, note that $|E_\triangle(G)| \leq 9$.
Since $|\overline{E}_\triangle(G)|=|E(G)|-|E_\triangle(G)|$,
\[
|\overline{E}_\triangle(G)|=|E(G)|-|E_\triangle(G)|
\geq (|V(G)|+8)-9 \geq |V(G)|-i(G)-1
\]
and so, by Corollary~\ref{sufficient}, $G$ is competitively tight.

On the other hand, we know from Proposition~\ref{prop:nec}
that a graph $G$ having exactly three triangles is not competitively tight
if it satisfies $|E(G)| \leq |V(G)|+1$.
To show it, we first note that $\theta_E(E_\triangle(G);G)=3$
and $7 \leq |E_\triangle(G)|$.
Then
\[
|\overline{E}_\triangle(G)|=|E(G)|-|E_\triangle(G)|
\leq (|V(G)|+1)-7 \leq |V(G)|-\theta_E(E_\triangle(G);G)-3
\]
and so, by Proposition~\ref{prop:nec}, $G$ is not competitively tight.

\section{Further Study}

The lower bound given in Corollary \ref{sufficient} can be improved. 
To take a competitively tight graph which does not satisfy the condition 
of Corollary~\ref{sufficient}, 
let $n$ and $p$ be integers with $n \geq 7$ and 
$2 \leq p < \lfloor \tfrac{n}{3} \rfloor$. 
Let $G$ be the Cayley graph associated with 
$(\mathbb{Z} \slash n \mathbb{Z}, \{\pm 1, \pm 2, \ldots, \pm p\})$,
i.e., $G$ is the graph defined by 
\[
V(G) = \{v_i : i \in \mathbb{Z} \slash n \mathbb{Z} \} 
\quad \text{ and } \quad 
E(G)=\{v_i v_j : i-j \in \{\pm 1, \pm 2, \ldots, \pm p\}\}. 
\]
Then $|V(G)|=n$ and $|\overline{E}_{\triangle}(G)|=0$. 
Therefore, 
$|\overline{E}_{\triangle}(G)| < |V(G)| - 1$. 
As $i(G)=0$, $G$ does not satisfy 
the condition of Corollary \ref{sufficient}. 

Since any two of the edges in 
$\{v_i v_j : i-j \in \{\pm p\} \}$ are not
covered by the same clique in $G$, 
any edge clique cover of $G$ contains at least $n$ cliques. 
Therefore, $\theta_E(G) \geq n$.
Since $\theta_E(G) \leq n$ by {\cite[Lemma 2.4]{KPS}},
we have $\theta_E(G)=n$.
Note that $k(G)=2$ by {\cite[Theorem 1.3]{KPS}}.
Thus $G$ is competitively tight. 

Accordingly, we propose improving the lower bound 
given in Corollary \ref{sufficient} as a further study. 
In a similar vein, we suggest finding out 
whether or not the lower bound given 
in Proposition~\ref{prop:nec} is sharp.

\section*{Acknowledgments}

The authors 
are grateful to 
the anonymous referees 
for suggestions leading to improvements 
in the presentation of the results.


\end{document}